\newtheorem{theorem}{Theorem}[section]
\newtheorem{lemma}[theorem]{Lemma}
\newtheorem{corollary}[theorem]{Corollary}
\newtheorem{remark}[theorem]{Remark}
\newcommand{\R}{{\mathbb R}}
\newcommand{\N}{{\mathbb N}}
\newcommand{\Z}{{\mathbb Z}}
\newcommand{\C}{{\mathbb C}}
\newcommand{\si}{\sigma}
\newcommand{\siul}{\sigma^{\mathrm{u,l}}}
\newcommand{\siu}{\sigma^{\mathrm{u}}}
\newcommand{\sil}{\sigma^{\mathrm{l}}}
\newcommand{\sipmu}{\sigma_\pm^{\mathrm{u}}}
\newcommand{\sipml}{\sigma_\pm^{\mathrm{l}}}
\newcommand{\sipmul}{\sigma_\pm^{\mathrm{u,l}}}
\newcommand{\simpul}{\sigma_\mp^{\mathrm{u,l}}}
\newcommand{\lau}{\lambda^{\mathrm{u}}}
\newcommand{\lal}{\lambda^{\mathrm{l}}}
\newcommand{\laul}{\lambda^{\mathrm{u,l}}}
\newcommand{\la}{\lambda}
\newcommand{\pa}{\partial}
\newcommand{\nn}{\nonumber}
\newcommand{\be}{\begin{equation}}
\newcommand{\beq}{\begin{equation}}
\newcommand{\ee}{\end{equation}}
\newcommand{\eeq}{\end{equation}}
\newcommand{\bea}{\begin{eqnarray}}
\newcommand{\eea}{\end{eqnarray}}
\newcommand{\ol}{\overline}
\newcommand{\sign}{\mathop{\rm sign}}
\newcommand{\ov}{\overline}
\newcommand{\I}{\mathrm{i}}
\newcommand{\re}{\mathrm{Re}}
\newcommand{\im}{\mathrm{Im}}
\newcommand{\res}{\mathrm{Res}}
\newcommand{\clos}{\mathop{\rm clos}}
\newcommand{\inte}{\mathop{\rm int}}
\newcommand{\vprod}[2]{\!\!\!\!\begin{array}{c} \mbox{\raisebox{-0.5ex}[0.5ex]
{$\scriptstyle #2 $}} \\ \displaystyle \hspace*{1.1ex}\prod{}^* \\
\mbox{\raisebox{0.6ex}[-0.6ex]{$ \scriptstyle  #1 $}} \end{array}}
\newcommand{\vsum}[2]{\!\!\!\!\begin{array}{c} \mbox{\raisebox{-0.5ex}[0.5ex]
{$\scriptstyle #2 $}} \\ \displaystyle \hspace*{1.1ex}\sum{}^* \\
\mbox{\raisebox{0.6ex}[-0.6ex]{$ \scriptstyle  #1 $}} \end{array}}
\def\Xint#1{\mathchoice
   {\XXint\displaystyle\textstyle{#1}}%
   {\XXint\textstyle\scriptstyle{#1}}%
   {\XXint\scriptstyle\scriptscriptstyle{#1}}%
   {\XXint\scriptscriptstyle\scriptscriptstyle{#1}}%
   \!\int}
\def\XXint#1#2#3{{\setbox0=\hbox{$#1{#2#3}{\int}$}
     \vcenter{\hbox{$#2#3$}}\kern-.5\wd0}}
\def\dashint{\Xint-}
\newcommand{\sig}{\sigma}
\newcommand{\lam}{\lambda}
\newcommand{\ga}{\gamma}
\newcommand{\de}{\delta}
\numberwithin{equation}{section}
\begin{document}

\title[Scattering Theory for Steplike Quasi-Periodic Background]{Scattering Theory for
Jacobi Operators with General Steplike Quasi-Periodic Background}

\author[I. Egorova]{Iryna Egorova}
\address{Institute for Low Temperature Physics\\ 47,Lenin ave\\ 61164 Kharkiv\\ Ukraine}
\email{\href{mailto:egorova@ilt.kharkov.ua}{egorova@ilt.kharkov.ua}}

\author[J. Michor]{Johanna Michor}
\address{Imperial College\\
180 Queen's Gate\\ London SW7 2BZ\\ and International Erwin Schr\"odinger
Institute for Mathematical Physics, Boltzmanngasse 9\\ 1090 Wien\\ Austria}
\email{\href{mailto:Johanna.Michor@esi.ac.at}{Johanna.Michor@esi.ac.at}}
\urladdr{\href{http://www.mat.univie.ac.at/~jmichor/}{http://www.mat.univie.ac.at/\~{}jmichor/}}

\author[G. Teschl]{Gerald Teschl}
\address{Faculty of Mathematics\\
Nordbergstrasse 15\\ 1090 Wien\\ Austria\\ and International Erwin Schr\"odinger
Institute for Mathematical Physics, Boltzmanngasse 9\\ 1090 Wien\\ Austria}
\email{\href{mailto:Gerald.Teschl@univie.ac.at}{Gerald.Teschl@univie.ac.at}}
\urladdr{\href{http://www.mat.univie.ac.at/~gerald/}{http://www.mat.univie.ac.at/\~{}gerald/}}

\dedicatory{To Vladimir Aleksandrovich Marchenko and Leonid Andreevich Pastur,
our teachers and inspiring colleagues.}
\thanks{Work supported by the Austrian Science Fund (FWF) under Grants No.\ Y330 and J2655.}

\keywords{Inverse scattering, Jacobi operators, quasi-periodic, steplike}
\subjclass[2000]{Primary 47B36, 81U40; Secondary 34L25, 39A11}

\begin{abstract}
We develop direct and inverse scattering theory for Jacobi operators
with steplike coefficients which are asymptotically close to
different finite-gap quasi-periodic coefficients on different sides.
We give a complete characterization of the scattering data, which
allow unique solvability of the inverse scattering problem in the
class of perturbations with finite first moment.
\end{abstract}

\maketitle

\section{Introduction}

In this paper we consider direct and inverse scattering theory for Jacobi operators with
steplike quasi-periodic finite-gap background, using the Marchenko \cite{mar} approach.

Scattering theory for Jacobi operators is a classical topic with a long tradition. Originally developed
on an informal level by Case in \cite{dinv4}, the first rigorous results for the case of a
constant background were given by Guseinov \cite{gu} with further extensions by
Teschl \cite{tivp}, \cite{tjac}. The case of periodic backgrounds was completely
solved in \cite{voyu} (who in fact handle almost periodic operators with a homogenous
Cantor type spectrum) respectively \cite{emtqps} using different approaches. Moreover,
the case of a steplike situation, where the coefficients are asymptotically close to
two different quasi-periodic finite-gap operators, was solved in \cite{emtstep} (see
also \cite{baeg}, \cite{eg}) under the restriction that the two background operators are isospectral.
It is the purpose of the present paper to remove this restriction.

We should also mention that scattering theory for Jacobi operators is directly
applicable to the investigation of the Toda lattice with initial data in the above mentioned classes. See
for example \cite{bdmek}, \cite{dkkz}, \cite{vdo} for steplike constant backgrounds,
and \cite{emtist}, \cite{emtsr}, \cite{kateptr}, \cite{katept}, and \cite{mtqptr} for periodic backgrounds.
For further possible applications and additional references we refer to the discussion in \cite{emtstep}.

Finally, let us give a brief overview of the remaining sections.
After recalling some necessary facts on algebro-geometric quasi-periodic finite-gap
operators in Section~\ref{secQP}, we construct the transformation operators and
investigate the properties of the scattering data in Section~\ref{secSD}. In Section~\ref{secGLM}
we derive the Gel'fand-Levitan-Marchenko equation and show that it uniquely determines
the operator. In addition, we formulate necessary conditions for the scattering data to uniquely
determine our Jacobi operator. Our final Section~\ref{secINV} shows that our necessary
conditions for the scattering data are also sufficient.

\section{Step-like finite-band backgrounds}
\label{secQP}

First we need to recall some facts on quasi-periodic finite-band Jacobi operators
which contain all periodic operators as a special case. We refer to
\cite[Chapter~9]{tjac} and \cite{emtqps} for details.

Let $H_q^\pm$ be two quasi-periodic finite-band Jacobi operators,\footnote{Everywhere in this
paper the sub or super index "$+$" (resp.\ "$-$") refers to the background on the right
(resp.\ left) half-axis.}
\be
H_q^\pm f(n) = a_q^\pm(n) f(n+1) + a_q^\pm(n-1) f(n-1) + b_q^\pm(n) f(n),
\quad f\in\ell^2(\Z),
\ee
associated with the Riemann surface of the square root
\begin{equation}\label{defP}
P_\pm(z)= -\prod_{j=0}^{2g_\pm+1} \sqrt{z-E_j^\pm}, \qquad
E_0^\pm < E_1^\pm < \cdots < E_{2g_\pm+1},
\end{equation}
where $g_\pm\in \N$ and $\sqrt{.}$ is the standard root with branch
cut along $(-\infty,0)$. In fact, $H_q^\pm$ are uniquely determined by
fixing a Dirichlet divisor
$\sum_{j=1}^{g^\pm}(\mu_j^\pm, \si_j^\pm)$, where
$\mu_j^\pm\in[E_{2j-1}^\pm,E_{2j}^\pm]$ and $\si_j^\pm\in\{-1, 1\}$.
The spectra of $H_q^\pm$ consist of $g_\pm+1$ bands
\be \label{0.1}
\si_\pm:=\sig(H_q^\pm) = \bigcup_{j=0}^{g_\pm}
[E_{2j}^\pm,E_{2j+1}^\pm].
\ee
We will identify the set
$\C\setminus\sig(H_q^\pm)$ with the upper sheet of the Riemann surface.
The upper and lower sides of the cuts over the spectrum are denoted by
$\siu$ and $\sil$ and the symmetric points on these cuts by
$\lau$ and $\lal$, that is,
\begin{equation}\nn
f(\lam^u)=\lim_{\epsilon \downarrow 0}f(\lam + i\epsilon), \quad
f(\lam^l)=\lim_{\epsilon \downarrow 0}f(\lam - i\epsilon), \quad
\lam \in \sig_\pm.
\end{equation}

We will develop the scattering theory for the operator
\be \label{0.2}
H f(n)=a(n-1) f(n-1) + b(n) f(n)+ a(n) f(n+1), \quad n\in\Z,
\ee
whose coefficients are asymptotically close to the
coefficients of $H_q^\pm$ on the corresponding half-axes:
\be \label{hypo}
\sum_{n = 0}^{\pm \infty} |n| \Big(|a(n) -
a_q^\pm(n)| + |b(n) - b_q^\pm(n)| \Big) < \infty.
\ee
The special case $H_q^- = H_q^+$ has been exhaustively studied in \cite{emtqps}
(see also \cite{voyu}) and the case where $H_q^-$ and $H_q^+$ are in the
same isospectral class $\si_-=\si_+$ was treated in \cite{emtstep}.
Several results are straightforward generalizations, in such
situations we will simply refer to \cite{emtqps}, \cite{emtstep} and
only point out possible differences.

Let $\psi_q^\pm(z,n)$ be the Floquet solutions of the spectral equations
\be\label{0.12}
H_q^\pm \psi(n)=z \psi(n), \qquad z\in\C,
\ee
that decay for $z\in\C\setminus\si_\pm$ as $n\to \pm\infty$.
They are uniquely defined by the condition $\psi_q^\pm(z,0)=1$,
$\psi_q^\pm(z,\cdot)\in\ell^2(\Z_\pm)$. The solution $\psi_q^+(z,n)$
(resp.\ $\psi_q^-(z,n)$)  coincides with the upper (resp.\ lower)
branch of the Baker--Akhiezer functions of $H_q^+$ (resp.\ $H_q^-$),
see \cite{tjac}. The second solutions $\breve \psi_q^\pm(z,n)$ are
given by the other branch of the Baker--Akhiezer
functions and satisfy $\breve \psi_q^\pm(z,\cdot)\in\ell^2(\Z_\mp)$ as
$z\in\C\setminus\si_\pm$. Their Wronskian is equal to
\beq\label{2.9}
W_q^\pm(\breve{\psi}_q^\pm(z), \psi_q^\pm(z)) =
\pm\frac{1}{\rho_\pm(z)},
\eeq
where
\begin{equation}\label{0.5}
\rho_\pm(z)=\frac{\prod_{j=1}^{g_\pm}(z-\mu_j^\pm)}{ P_\pm(z)}
\end{equation}
satisfy by our choice of the branch for the square root
\be \label{0.10}
\im(\rho_\pm(\lau))>0, \quad \im(\rho_\pm(\lal))<0,
\quad  \la\in\si_\pm.
\ee
In \eqref{2.9} the following notation is
used
\beq \label{new11}
W_{q,n}^\pm(f,g):=a_q^\pm(n) \left(f(n)
g(n+1) - f(n+1)g(n)\right).
\eeq

Note that $\psi_q^\pm(z,n)$,
$\breve\psi_q^\pm(z,n)$ have continuous limits as
$z\to\laul\in\siul_\pm\setminus\partial\si_\pm$, where
$$\partial\si_\pm=\{E_0^\pm,...,E_{2g_\pm+1}^\pm\},$$
and they satisfy the symmetry property
\be \label{0.6}
\psi_q^\pm(\lal,n)= \ol{\psi_q^\pm(\lau,n)}=\breve\psi_q^\pm(\lau,n), \quad \lam\in\si_\pm.
\ee

The points $(\mu_j^\pm,\si_j^\pm)$, $1\le j\le g_\pm$, form the divisors of poles of the
Baker--Akhiezer functions. Correspondingly, the sets of Dirichlet
eigenvalues $\{\mu_1^\pm,...,\mu_{g_\pm}^\pm\}$ can
be divided in three disjoint subsets
\begin{align}
\begin{split}
M^\pm &= \{\mu_j^\pm \,|\, \mu_j^\pm \in \R\backslash\sig_\pm \mbox{
is a pole of } \psi_q^\pm(z,1)\},\\
\breve{M}^\pm &= \{ \mu_j^\pm\,|\,\mu_j^\pm
 \in \R\backslash\sig_\pm \mbox{ is a pole of } \overline{\psi_q^\pm(z,1)}\},\\
\hat{M}^\pm &= \{ \mu_j^\pm \,|\, \mu_j^\pm \in \partial \si_\pm \}.
\end{split}
\end{align}
In order to remove the singularities of $\psi_q^\pm(z,n)$, $\breve\psi_q^\pm(z,n)$
we introduce
\begin{align}\label{0.63}
\begin{split}
\delta_\pm(z) &:= \prod_{\mu^\pm_j\in M_\pm}  (z - \mu^\pm_j),\\
\hat \delta_\pm(z) &:= \prod_{\mu^\pm_j\in M_\pm}
(z-\mu_j^\pm) \prod_{\mu^\pm_j \in \hat M_\pm} \sqrt{z -
\mu^\pm_j},\\
\breve \delta_\pm(z) &:= \prod_{\mu^\pm_j\in
\breve M_\pm} (z-\mu_j^\pm) \prod_{\mu^\pm_j \in \hat M_\pm} \sqrt{z
- \mu^\pm_j},
\end{split}
\end{align}
where $\prod =1$ if there are no multipliers, and set
\be
\tilde\psi_q^\pm(z,n)=\delta_\pm(z) \psi_q^\pm(z,n),\quad
\hat\psi_q^\pm(z,n)=\hat\delta_\pm(z) \psi_q^\pm(z,n).
\ee

\begin{lemma}\label{lem1.1}
The Floquet solutions $\psi_q^\pm$, $\breve\psi_q^\pm$ have the following properties:
\begin{enumerate}
\item
The functions $\psi_q^\pm(z,n)$ (resp.\ $\breve\psi_q^\pm(z,n)$) are
holomorphic as functions of $z$ in the domain $\mathbb{C}\setminus
(\si_\pm\cup M_\pm)$ (resp.\ $\mathbb{C}\setminus (\si_\pm\cup\breve
M_\pm)$), take real values on the set $\mathbb{R}\setminus
\sigma_\pm$, and have simple poles at the points of the set $M_\pm$
(resp.\ $\breve M_\pm$). They are continuous up to the boundary
$\sipmu\cup \sipml$ except at the points in $\hat M_\pm$ and
satisfy the symmetry property \eqref{0.6}. For $E \in \hat M_\pm$,
they satisfy
\[
\psi_q^\pm(z,n)=O\left(\frac{1}{\sqrt{z-E}}\right), \quad
\breve\psi_q^\pm(z,n) =O\left(\frac{1}{\sqrt{z-E}}\right), \quad
z\to E\in \hat M_\pm.
\]
Moreover, the estimate
\begin{equation}\label{new7}
\hat\psi_q^\pm(z,n) - \hat\psi_q^\pm(E,n) = O(\sqrt{z-E}), \qquad E\in\pa\si_\pm,
\end{equation}
is valid.
\item
The following asymptotic expansions hold as $z \rightarrow \pm
\infty$
\begin{equation}  \label{1.12}
\psi_q^\pm(z,n) = z^{\mp n}\Big(\vprod{j=0}{n-1} a_q^\pm(j)
\Big)^{\pm 1} \Big(1 \pm \frac1z  \vsum{j=0}{n-1} b_q^\pm(j +
{\scriptstyle{1 \atop 0}}) + O(\frac{1}{z^2})\Big),
\end{equation}
where
\begin{equation*}
 \vprod{j=n_0}{n-1}f(j) = \left\{ \begin{array}{c@{\ }l}
\prod\limits_{j=n_0}^{n-1} f(j), & n > n_0, \\ 1, & n=n_0,\\
\prod\limits_{j=n}^{n_0-1} f(j)^{-1}, & n < n_0, \end{array} \right.
 \quad
\vsum{j=n_0}{n-1} f(j) = \left\{ \begin{array}{c@{\ }l}  \sum\limits_{j=n_0}^{n-1}
f(j), & n > n_0, \\ 0, & n = n_0, \\ -\sum\limits_{j=n}^{n_0-1} f(j), & n < n_0.\end{array}
\right.
\end{equation*}
\item
The functions $\psi_q^\pm(\la,n)$ form a complete orthogonal system
on the spectrum with respect to the weight
\be \label{1.121}
d\omega_\pm(\la) = \frac{1}{2\pi\I} \rho_\pm(\la) d\la,
\ee
namely
\begin{equation}
\oint_{\sig_\pm}\overline{\psi_q^\pm(\lam,m)}\psi_q^\pm(\lam,n)d\omega_\pm(\la)
= \delta(n,m), \label{1.14}
\end{equation}
where
\be \label{1.141}
\oint_{\si_\pm}f(\la) d\la := \int_{\sipmu} f(\lau) d\la - \int_{\sipml} f(\lal) d\la.
\ee
Here $\delta(n,m)=1$ if $n=m$ and $\delta(n,m)=0$ else is the Kronecker
delta.
\end{enumerate}
\end{lemma}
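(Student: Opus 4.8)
The plan is to deduce all three parts from the standard algebro--geometric description of $H_q^\pm$ recalled in \cite[Ch.~9]{tjac} and \cite{emtqps}, keeping careful track of the sign and sheet conventions fixed above. The starting point is that $\psi_q^+(z,\cdot)$ (resp.\ $\psi_q^-(z,\cdot)$) is the branch of the Baker--Akhiezer function of $H_q^+$ (resp.\ $H_q^-$) on the Riemann surface of $P_\pm$ that lies in $\ell^2(\Z_+)$ (resp.\ $\ell^2(\Z_-)$) and is normalized by $\psi_q^\pm(z,0)=1$, while $\breve\psi_q^\pm$ is the image of $\psi_q^\pm$ under the sheet--exchange involution. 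As a meromorphic function on the surface, $\psi_q^+(\cdot,n)$ has, for $n\neq0$, a zero of order $|n|$ at one of the two points over $\infty$, a pole of order $|n|$ at the other, a zero divisor depending on $n$, and a pole divisor equal to the given Dirichlet divisor $\sum_j(\mu_j^+,\si_j^+)$, which does not depend on $n$. The expansion \eqref{1.12} is part of this defining data at the points over $\infty$; it also follows by a direct induction in $n$ from the three--term recurrence, inserting the ansatz $\psi_q^+(z,n)=z^{-n}\big(\prod_{j=0}^{n-1}a_q^+(j)\big)\big(1+c_nz^{-1}+O(z^{-2})\big)$ and matching powers of $z^{-1}$, which forces $c_{n+1}-c_n=b_q^+(n+1)$, i.e.\ $c_n=\sum_{j=0}^{n-1}b_q^+(j+1)$; running the recurrence backwards yields $n<0$, and the signs $\mp$, $(\cdot)^{\pm1}$ in \eqref{1.12} only record that $\psi_q^-$ is the $\ell^2(\Z_-)$--branch, analyzed as $z\to-\infty$, with growth and decay at the two infinities interchanged.

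Part (i) is then read off from the divisor picture. Projecting the pole divisor of $\psi_q^+(\cdot,n)$ onto the upper sheet $\C\setminus\si_+$ leaves simple poles precisely at those $\mu_j^+$ lying in the interior of a gap with $\si_j^+=1$, that is, at $M_+$, which yields holomorphy on $\C\setminus(\si_+\cup M_+)$ and the simple--pole statement; the statement for $\breve\psi_q^\pm$ is the same after the sheet--exchange, whose poles land on $\breve M_\pm$. If $E=\mu_j^+\in\pa\si_+$, i.e.\ $E\in\hat M_+$, the local parameter on the surface near $E$ is $\sqrt{z-E}$, so a simple pole there reads $O((z-E)^{-1/2})$ in $z$; multiplying by $\hat\delta_\pm$, whose factors $\sqrt{z-\mu_j^\pm}$ cancel exactly these half--poles (and whose factors $z-\mu_j^\pm$ cancel the gap poles), makes $\hat\psi_q^\pm(\cdot,n)$ holomorphic on the surface near every $E\in\pa\si_\pm$, hence expandable as $c_0+c_1\sqrt{z-E}+O(z-E)$, which is \eqref{new7}. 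Reality on $\R\setminus\si_\pm$, continuity up to $\sipmu\cup\sipml$ away from $\hat M_\pm$, and the symmetry \eqref{0.6} all follow from the antiholomorphic involution $p\mapsto p^*$ (complex conjugation of $z$), under which the Baker--Akhiezer function obeys $\overline{\psi_q^\pm(z,n)}=\psi_q^\pm(\bar z,n)$ and which interchanges the two lips of each cut, combined with the fact that the sheet--exchange carries $\psi_q^\pm$ into $\breve\psi_q^\pm$.

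For part (iii) the route is via the resolvent. By \eqref{2.9} the kernel $G_\pm(z,n,m)=\spr{\delta_n}{(H_q^\pm-z)^{-1}\delta_m}$ equals $\rho_\pm(z)$ times the Floquet solution of $H_q^\pm$ that is $\ell^2$ at $-\infty$ evaluated at $\min(n,m)$, times the one that is $\ell^2$ at $+\infty$ evaluated at $\max(n,m)$; it is holomorphic on $\C\setminus\si_\pm$, since the simple zeros of $\rho_\pm$ at the $\mu_j^\pm$ (see \eqref{0.5}) cancel the at most simple pole of the one singular Floquet factor and $H_q^\pm$ has no eigenvalues. Writing $G_\pm(z,n,m)=\int_{\si_\pm}(\la-z)^{-1}d\mu^\pm_{n,m}(\la)$ with $\mu^\pm_{n,m}(\si_\pm)=\delta(n,m)$ and using $G_\pm(z,n,m)=-\delta(n,m)z^{-1}+O(z^{-2})$ as $z\to\infty$, integration over a large circle enclosing $\si_\pm$ gives $\delta(n,m)=-\tfrac{1}{2\pi\I}\oint G_\pm(z,n,m)\,dz$; collapsing this contour onto the two lips of the bands (no singularity is crossed) and matching orientation with \eqref{1.141} turns it into $\delta(n,m)=\oint_{\si_\pm}\breve\psi_q^+(\la,\min(n,m))\,\psi_q^+(\la,\max(n,m))\,d\omega_\pm(\la)$ for $H_q^+$, and the analogue with $\psi_q^-$ and $\breve\psi_q^-$ interchanged for $H_q^-$. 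Finally \eqref{0.6}, which gives $\breve\psi_q^\pm(\laul,k)=\overline{\psi_q^\pm(\laul,k)}$ on both lips, brings the integrand into the form of \eqref{1.14} (the placement of $\min$ and $\max$ being immaterial because \eqref{1.14} is symmetric in $n,m$; for the left side of \eqref{1.14} this symmetry comes from $\rho_\pm$ being purely imaginary along $\si_\pm$, a consequence of \eqref{0.5}).

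The main obstacle is not a single hard estimate but the bookkeeping in part (i): correctly sorting the Dirichlet eigenvalues into $M_\pm$, $\breve M_\pm$, $\hat M_\pm$ and reading off the precise local behavior ($O((z-E)^{-1/2})$ versus \eqref{new7}) in a way consistent with the identification $\C\setminus\si_\pm\simeq$ upper sheet and with the chosen branch of the square root. Once this dictionary is in place, part (ii) is an elementary induction and part (iii) requires only careful tracking of signs and orientations in the contour collapse.
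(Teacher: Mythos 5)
The paper offers no proof of Lemma~\ref{lem1.1}; it recalls these facts from \cite[Chapter~9]{tjac} and \cite{emtqps}, and your sketch reproduces precisely the standard arguments behind those citations: the divisor/local-parameter description of the Baker--Akhiezer function (with $\sqrt{z-E}$ as local coordinate at branch points) for (i), the induction by matching powers of $z^{-1}$ in the three-term recurrence for (ii), and the Green's-function representation $G_\pm(z,n,m)=\rho_\pm(z)\breve\psi_q^\pm(z,n\wedge m)\psi_q^\pm(z,n\vee m)$ with contour collapse onto the two lips of $\si_\pm$ for (iii). This is correct and essentially the same route as the cited sources; the only cosmetic point is that the expansion \eqref{1.12} holds as $z\to\infty$ in $\C\setminus\si_\pm$ for both $\psi_q^+$ and $\psi_q^-$ (the $\pm$ merely labels the two backgrounds), not "as $z\to-\infty$" for the minus case.
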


\section{Scattering data}
\label{secSD}

Now let $H$ be a steplike operator with coefficients
$a(n)$, $b(n)$ satisfying \eqref{hypo}. The two solutions $\psi_\pm(z,n)$ of the spectral
equation
\be \label{0.11}
H \psi = z \psi, \quad z\in\C,
\ee
which are asymptotically close to the Floquet solutions $\psi_q^\pm(z,n)$ of the background
equations \eqref{0.12} as $n\to\pm\infty$, are called Jost solutions. They can
be represented as (see \cite{emtqps})
\be \label{3.16}
\psi_\pm(z,n) = \sum_{m=n}^{\pm \infty} K_\pm(n,m) \psi_q^\pm(z, m),
\ee
where the
functions $K_\pm(n, .)$ are real valued and satisfy the estimate
 \be \label{estimate K q}
|K_\pm(n,m)| \leq C_\pm(n) \sum_{j=[\frac{m+n}{2}] }^{\pm \infty}
\Big(|a(j)-a_q^\pm(j)| + |b(j)-b_q^\pm(j)|\Big), \quad \pm m
> \pm n>0.
\ee
The functions $C_\pm(n) > 0$ decrease monotonically as
$n\rightarrow \pm \infty$.
Moreover, we have
\begin{align} \label{0.13}
\begin{split}
a(n) &= a_q^+(n)\frac{K_+(n+1, n+1)}{K_+(n,n)}, \\
a(n) &= a_q^-(n)\frac{K_-(n, n)}{K_-(n+1,n+1)}, \\
b(n) &= b_q^+(n) + a_q^+(n)\frac{K_+(n, n+1)}{K_+(n,n)} -
a_q^+(n-1) \frac{K_+(n-1, n)}{K_+(n-1,n-1)}, \\
b(n) &= b_q^-(n) + a_q^-(n-1)\frac{K_-(n, n-1)}{K_-(n,n)} - a_q^-(n)
\frac{K_-(n+1, n)}{K_-(n+1,n+1)},
\end{split}
\end{align}
which implies (cf.\ \cite{emtqps}) the following asymptotic behavior
of the Jost solutions as $z\to\pm \infty$ using \eqref{3.16}, \eqref{1.12},
\be                 \label{B4jost}
\psi_\pm(z,n) =   z^{\mp n}K_\pm(n,n)
\Big(\vprod{j=0}{n-1}a_q^\pm(j)\Big)^{\pm 1} \Big(1 + \Big(B_\pm(n)
\pm \vsum{j=1}{n} b_q^\pm(j- {\scriptstyle{0 \atop 1}})
\Big)\frac{1}{z} + O(\frac{1}{z^2}) \Big), \ee where
\beq
\label{0.14}
B_\pm(n)= \sum_{m=n\pm 1}^{\pm\infty} (b_q^\pm(m)-b(m)).\eeq
For  $\la \in \siu_\pm\cup\sil_\pm$ a second pair of solutions of \eqref{0.11} is given
by
\be
\breve\psi_\pm(\la,n)=\sum_{m=n}^{\pm \infty} K_\pm(n,m) \breve\psi_q^\pm(\la, m),
\quad \la \in \siu_\pm\cup\sil_\pm,
\ee
which cannot be continued to the complex plane.
Note that $\breve\psi_\pm(\la,n)=\overline{\psi_\pm(\la,n)}$, $\la \in \sig_\pm$,
and from \eqref{hypo}, \eqref{3.16} we conclude
\be\label{0.62}
W(\overline{\psi_\pm(\la)}, \psi_\pm(\la)) = W_q^\pm(\breve{\psi}_q^\pm(\la), \psi_q^\pm(\la)) =
\pm \rho_\pm(\la)^{-1}.
\ee

The Jost solutions $\psi_\pm$ are holomorphic in the domains
$\mathbb{C}\setminus \left(\si_\pm\cup M_\pm\right)$ and inherit
almost all properties of their background counterparts listed in
Lemma~\ref{lem1.1}. As before, we set
\be\label{0.64}
\tilde\psi_\pm(z,n)=\delta_\pm(z)
\psi_\pm(z,n),\quad \hat\psi_\pm(z,n)=\hat\delta_\pm(z) \psi_\pm(z,n).
\ee

The following Lemma is proven in \cite{emtqps}.

\begin{lemma}\label{lem2.1}
The Jost solutions  have the following properties.
\begin{enumerate}
\item
For all $n$, the functions $\psi_\pm(z,n)$ are holomorphic in the
domain $\mathbb{C}\setminus (\si_\pm\cup M_\pm)$ with respect to $z$
and continuous up to the boundary $(\sipmu\cup
\sipml)\setminus\pa\si_\pm$, where \be \label{symm}
\psi_\pm(\lau,n)=\overline{\psi_\pm(\lal,n)}, \quad \la \in
(\sipmu\cup \sipml)\setminus\pa\si_\pm. \ee The functions
$\psi_\pm(z,n)$ are real valued for $z \in \mathbb{R}\setminus
\sigma_\pm$ and have simple poles at $\mu_j \in M_\pm$. Moreover,
$\hat\psi_\pm$ are continuous up to the boundary $\sipmu\cup
\sipml$.
\item
At the band edges we have for $ \la\in\siul_\pm$
\beq\label{new3}\begin{array}{lll}
&\psi_\pm(\la,n) -\overline{\psi_\pm(\la,n)}=o(1), &
E\in\pa\si_\pm\setminus\hat M_\pm, \\
&\psi_\pm(\la,n) +\overline{\psi_\pm(\la,n)}=o\Big(\frac{1}{\sqrt{\la - E}}\Big),&
E\in\hat M_\pm.\end{array}\eeq
\end{enumerate}
\end{lemma}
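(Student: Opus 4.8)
For the final statement (Lemma~\ref{lem2.1}), the plan is to derive every claim from the transformation--operator representation \eqref{3.16}, using the bound \eqref{estimate K q} to pass the properties of the background Floquet solutions from Lemma~\ref{lem1.1} to the Jost solutions. Fix $n$ and view $\psi_\pm(z,n)=\sum_{m=n}^{\pm\infty}K_\pm(n,m)\,\psi_q^\pm(z,m)$ as a series with real, $z$--independent coefficients $K_\pm(n,m)$ which are summable in $m$ --- this summability being exactly what \eqref{hypo} buys us through \eqref{estimate K q}. On any compact subset of $\C\setminus(\si_\pm\cup M_\pm)$ the functions $\psi_q^\pm(z,\cdot)$ decay geometrically with a uniform rate, so the series converges absolutely and locally uniformly there; since each summand is holomorphic on $\C\setminus(\si_\pm\cup M_\pm)$ with at most a simple pole at each $\mu_j\in M_\pm$ (Lemma~\ref{lem1.1}(i)), so is $\psi_\pm(z,n)$ --- equivalently $\tilde\psi_\pm(z,n)=\delta_\pm(z)\psi_\pm(z,n)=\sum_m K_\pm(n,m)\tilde\psi_q^\pm(z,m)$ is holomorphic on $\C\setminus\si_\pm$, which also gives the pole statement in part~(i). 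Reality of $\psi_\pm(z,n)$ on $\R\setminus\si_\pm$ and the symmetry \eqref{symm} then follow termwise from the reality of the $K_\pm(n,m)$ together with, respectively, the reality of $\psi_q^\pm(z,m)$ on $\R\setminus\si_\pm$ and the relation $\overline{\psi_q^\pm(\lal,m)}=\psi_q^\pm(\lau,m)$ of \eqref{0.6}.

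For the boundary behaviour I would run a Weierstrass $M$--test. On the spectrum the values $\psi_q^\pm(\la,m)$ no longer decay but remain bounded in $m$, uniformly for $\la$ in compact subsets of $(\sipmu\cup\sipml)\setminus\pa\si_\pm$, so summing against the summable sequence $K_\pm(n,\cdot)$ gives uniform convergence there and hence continuous boundary values of $\psi_\pm(\cdot,n)$ satisfying \eqref{symm}. Near the band edges the Floquet exponent degenerates, but the singularities of $\psi_q^\pm$ are cancelled in $\hat\psi_q^\pm=\hat\delta_\pm\psi_q^\pm$, for which \eqref{new7} yields a uniform modulus of continuity up to $\pa\si_\pm$; the resulting bound $\sup\{|\hat\psi_q^\pm(\la,m)|:m,\ \la\ \text{near}\ \pa\si_\pm\}<\infty$, summed against $K_\pm(n,\cdot)$, shows that $\hat\psi_\pm(z,n)=\hat\delta_\pm(z)\psi_\pm(z,n)=\sum_m K_\pm(n,m)\hat\psi_q^\pm(z,m)$ converges uniformly up to all of $\sipmu\cup\sipml$ and is therefore continuous there. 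This proves part~(i).

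For part~(ii) I would first establish the background analogues of \eqref{new3} and then transfer them through \eqref{3.16}. The clean way to see the background estimates is that the symmetric and antisymmetric combinations $\psi_q^\pm(z,n)\pm\breve\psi_q^\pm(z,n)$ of the two Baker--Akhiezer branches are invariant, resp.\ anti-invariant, under the sheet exchange of the underlying hyperelliptic curve, hence descend to functions of $z$ alone: $\psi_q^\pm+\breve\psi_q^\pm$ is meromorphic (in fact rational) in $z$, so near any band edge $E$ its Laurent expansion contains only integer powers of $z-E$, and since it is also $O\big((z-E)^{-1/2}\big)$ by Lemma~\ref{lem1.1}(i) it must be $O(1)$ there; while $\psi_q^\pm-\breve\psi_q^\pm$ equals $P_\pm(z)$ times a function of $z$ regular at a band edge $E\notin\hat M_\pm$, hence $O\big(\sqrt{z-E}\big)=o(1)$ there because $P_\pm$ vanishes like $\sqrt{z-E}$ at every band edge. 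Since $\breve\psi_q^\pm(\lau,m)=\psi_q^\pm(\lal,m)$ by \eqref{0.6}, this gives $\psi_q^\pm(\lau,m)-\psi_q^\pm(\lal,m)=o(1)$ for $E\in\pa\si_\pm\setminus\hat M_\pm$ and $\psi_q^\pm(\lau,m)+\psi_q^\pm(\lal,m)=o\big((\la-E)^{-1/2}\big)$ for $E\in\hat M_\pm$. Inserting these into \eqref{3.16} and using $\overline{\psi_\pm(\la,n)}=\psi_\pm(\lal,n)$ from \eqref{symm}, the summability of $K_\pm(n,\cdot)$, and uniform-in-$m$ bounds for $\psi_q^\pm$ (resp.\ $\psi_q^\pm+\breve\psi_q^\pm$) near $\pa\si_\pm$, dominated convergence then yields \eqref{new3}.

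The step I expect to be the main obstacle is precisely this last transfer near the band edges: one must control the tail of the series uniformly in a full neighbourhood of a band edge, where the background solutions have lost their exponential decay, so that the band--edge limit may be taken under the sum. This requires versions of the estimates \eqref{estimate K q} and \eqref{new7} with constants uniform in $m$, and it is the reason the finite first moment is built into \eqref{hypo} rather than mere summability of the perturbation. Everything else --- holomorphy, the pole structure, reality, and the symmetry \eqref{symm} --- is a routine termwise inheritance from Lemma~\ref{lem1.1} once the uniform convergence furnished by \eqref{estimate K q} has been set up.
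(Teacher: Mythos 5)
Your argument is essentially the proof the paper relies on: the paper gives no proof of Lemma~\ref{lem2.1} itself but defers to \cite{emtqps}, where exactly this transfer of the background properties of Lemma~\ref{lem1.1} through the transformation-operator representation \eqref{3.16}, with summability of $K_\pm(n,\cdot)$ supplied by \eqref{estimate K q} and \eqref{hypo}, is carried out. The uniform-in-$m$ bounds you flag as the main obstacle are the standard Baker--Akhiezer estimates established there (and in fact only the uniform $O(1)$ resp.\ $O((\la-E)^{-1/2})$ bounds on $\psi_q^\pm$ are needed, via a finite-part/tail splitting), while your involution/Puiseux argument for $\psi_q^\pm\pm\breve\psi_q^\pm$ at the band edges is the right mechanism, so the proposal is correct and matches the cited approach.
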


Next, we introduce the sets
\be \label{2.5}
\si^{(2)}:=\si_+\cap\si_-, \quad
\si_\pm^{(1)}=\clos\,(\si_\pm\setminus\si^{(2)}), \quad
\si:=\si_+\cup\si_-,
\ee
where $\si$ is the (absolutely) continuous
spectrum of $H$ and $\si_+^{(1)}\cup \si_-^{(1)}$ resp.\
$\si^{(2)}$ are the parts which are of multiplicity one resp.\ two.
We will denote the interior of the spectrum by $\inte(\si)$, that is,
$\inte(\si):=\si\setminus\pa\si$.

In addition to the continuous part, $H$ has a finite number of
eigenvalues situated in the gaps, $\si_d=\{\la_1,...,\la_p\} \subset \R \setminus \si$
(see, e.g., \cite{tosc}).
For every eigenvalue we introduce the corresponding norming constants
\be \label{norming}
\ga_{\pm, k}^{-1}=\sum_{n \in
\Z}|\tilde\psi_\pm(\la_k, n)|^2, \quad 1 \leq k \leq p.
\ee
Moreover, $\tilde{\psi}_{\pm}(\la_k, n) = c_k^{\pm} \tilde{\psi}_{\mp}(\la_k, n)$
 with $c_k^+ c_k^- = 1$.

Let
\beq\label{2.8}
W(z):=W(\psi_-(z), \psi_+(z))
\eeq
be the Wronskian of two Jost solutions. This function is meromorphic in
the domain $\mathbb{C}\setminus\si$ with possible poles at the
points $M_+\cup M_-\cup(\hat M_+\cap\hat M_-)$
and with possible square root singularities at the points $\hat
M_+\cup\hat M_-\setminus (\hat M_+\cap \hat M_-)$. Set
\beq \label{2.10}
\tilde W(z) =W(\tilde\psi_-(z), \tilde\psi_+(z)), \quad \hat W(z) =
W(\hat\psi_-(z), \hat\psi_+(z)),
\eeq
then $\hat W(\la)$ is holomorphic in
the domain $\mathbb{C}\setminus\mathbb{R}$
and continuous up to the boundary. But unlike to $W(z)$
and $\tilde W(z)$, the function  $\hat W(\la)$ may not take real
values on the set $\mathbb{R}\setminus\si$ and complex conjugated
values on the different sides of the spectrum. That is why it is
more convenient to characterize the spectral properties of the
operator $H$ by means of the function $\tilde W$, which can have
singularities at the points of the sets $\hat M_+\cup\hat M_-$.
We will study the precise character of these singularities in
Lemma~\ref{lem2.3} below.

Note that outside the spectrum the function $\tilde W(z)$ vanishes precisely at the
eigenvalues. However, it might also vanish inside the spectrum at points in $\pa\si_-\cup\pa\si_+$.
We will call such points virtual levels of the operator $H$,
\beq\label{2.363}
\si_v:= \{ E\in\si:\ \hat W(E)=0\},
\eeq
and we will show that $\si_v \subseteq \pa\si\cup(\pa\si_+^{(1)}\cap\pa\si_-^{(1)})$
in Lemma~\ref{lem2.3}.
All other points $E$ of the set $\pa\si_+\cup\pa\si_-$ correspond to
the generic case $\hat W(E)\neq 0$.

Our next aim is to derive the properties of the scattering matrix. Introduce the
scattering relations
\be\label{6.16}
T_\mp(\la) \psi_\pm(\la,n)
=\overline{\psi_\mp(\la,n)} + R_\mp(\la)\psi_\mp(\la,x),
\quad\la\in\simpul,
\ee
where the transmission and reflection coefficients are defined as usual,
\be\label{6.17}
T_\pm(\la):= \frac{W(\overline{\psi_\pm(\la)}, \psi_\pm(\la))}{W(\psi_\mp(\la), \psi_\pm(\la))},\quad
R_\pm(\la):= - \frac{W(\psi_\mp(\la),\overline{\psi_\pm(\la)})}{W(\psi_\mp(\la),\,\psi_\pm(\la))}, \quad\la\in \sipmul.
\ee
The equalities in \eqref{6.17} imply the identity
$$
\frac{1}{T_+(\la) \rho_+(\la)} = \frac{1}{T_-(\la) \rho_-(\la)} =W(\la), \quad \la\in\si^{(2)},
$$
where $W(\la)$ is the Wronskian of two Jost solutions \eqref{2.8}.
This Wronskian plays an important role in the characterization
of the properties of the scattering matrix. Namely, the following result is valid.

\begin{lemma}\label{lem2.3}
The entries of the scattering matrix have the following properties:

{\bf I}.
$$
\begin{array}{lcl}
{\bf (a)}&T_\pm(\lau) =\overline{T_\pm(\lal)},& \la\in\si_\pm,  \\
& R_\pm(\lau) =\overline{R_\pm(\lal)},&\la\in\si_\pm, \nn\\
{\bf (b)}&\displaystyle{\frac{T_\pm(\la)}{\overline{T_\pm(\la)}}} = R_\pm(\la),&
\la\in\si_\pm^{(1)},\nn\\
{\bf (c)}& 1 - |R_\pm(\la)|^2 =
\displaystyle{\frac{\rho_\pm(\la)}{\rho_\mp(\la)}}\,|T_\pm(\la)|^2,&\la\in\si^{(2)},\\
{\bf (d)}&\overline{R_\pm(\la)}T_\pm(\la) +
R_\mp(\la)\overline{T_\pm(\la)}=0,& \la\in\si^{(2)}.
\end{array}
$$

{\bf II}.
The functions $T_\pm(\la)$ can be extended analytically to the domain
$\mathbb{C} \setminus (\si\cup M_\pm\cup\breve M_\pm)$ and satisfy
\be \label{2.18}
\frac{1}{T_+(z) \rho_+(z)} = \frac{1}{T_-(z) \rho_-(z)} =W(z).
\ee
The function  $W(z)$  has the following properties:

{\bf (a)} The function $\tilde W(z) = \delta_+(z)\delta_-(z) W(z)$
 is holomorphic on $\mathbb{C}\setminus\si$ with simple zeros at the eigenvalues $\la_k$, where
\be \label{2.11}
\bigg(\frac{ d\tilde W}{d z}(\la_k)\bigg)^2
=\frac{1}{\ga_{+,k}\ga_{-,k}}.
\ee
Moreover,
\be \label{6.9}
\overline{\tilde W(\lau)}=\tilde W(\lal), \quad \la\in\si,
\qquad \tilde W(z)\in\mathbb{R}, \quad
z\in\mathbb{R}\setminus \sigma.
\ee

{\bf (b)} The function $\hat W(z) = \hat\delta_+(z) \hat\delta_-(z)
W(z)$ is continuous on  the set $\mathbb{C}\setminus\si$ up to the
boundary $\siu\cup\sil$. It can have zeros on the set
$\pa\si\cup(\pa\si_+^{(1)}\cap\pa\si_-^{(1)})$ and does not vanish
at the other points of the spectrum $\si$.  If $\hat W(E)=0$ as
$E\in\pa\si\cup(\pa\si_+^{(1)}\cap\pa\si_-^{(1)})$, then \be
\label{estim} \frac{1}{\hat W(\la)} = O\left(\frac{1}{\sqrt{\la -
E}}\right),\quad\mbox{where} \quad \la \in \si\quad \mbox{close
to}\quad E.\ee

{\bf (c)} In addition,
\be \label{T infty}
T_+(\infty)=T_-(\infty)>0.
\ee

{\bf III}.
{\bf (a)} The reflection coefficients $R_\pm(\la)$  are continuous functions on
$\inte(\sipmul)$.

{\bf (b)} If $E\in\pa\si_+\cap \partial\si_-$ and $\hat W(E)\neq 0$, then the functions $R_\pm(\la)$
are also continuous at $E$. Moreover,
\be\label{P.1}
R_\pm(E) = \left\{ \begin{array}{c@{\quad\mbox{for}\quad}l}
-1 & E\notin\hat M_\pm,\\
1 & E\in\hat M_\pm. \end{array}\right.
\ee
\end{lemma}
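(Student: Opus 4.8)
The plan is to establish the three parts of Lemma~\ref{lem2.3} in order, building each on the Wronskian identities and the boundary behaviour of the Jost solutions from Lemma~\ref{lem2.1}. For part {\bf I}, I would start from the definitions \eqref{6.17} together with the symmetry $\breve\psi_\pm(\la,n)=\overline{\psi_\pm(\la,n)}$ on $\si_\pm$ and the Wronskian normalization \eqref{0.62}. Property {\bf (a)} is immediate from \eqref{symm} and the fact that complex conjugation interchanges $\lau$ and $\lal$; one simply conjugates each Wronskian in \eqref{6.17}. For {\bf (b)}, on $\si_\pm^{(1)}$ the opposite background has no spectrum, so $\psi_\mp(\la,n)$ is real there (being a real-valued function off $\si_\mp$); hence $W(\psi_\mp,\overline{\psi_\pm})=\overline{W(\psi_\mp,\psi_\pm)}$, and dividing gives $R_\pm=T_\pm/\overline{T_\pm}$. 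For {\bf (c)} and {\bf (d)}, on $\si^{(2)}$ both $\psi_+$ and $\psi_-$ have genuine boundary values on both sides; I would expand the $2\times2$ Wronskian determinant of the pair $\{\psi_-,\overline{\psi_-}\}$ against $\{\psi_+,\overline{\psi_+}\}$ (a Plücker-type identity), substitute \eqref{0.62} for the two diagonal Wronskians $W(\overline{\psi_\pm},\psi_\pm)=\pm\rho_\pm^{-1}$, and read off the two relations after dividing by $|W(\la)|^2=|W(\psi_-,\psi_+)|^2$. This is exactly the classical computation from constant-background scattering, adapted by carrying the weights $\rho_\pm$.

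For part {\bf II}, the analytic continuation of $T_\pm$ follows from \eqref{6.17} and \eqref{2.18}: since $\psi_\pm(z,n)$ is holomorphic on $\C\setminus(\si_\pm\cup M_\pm)$ and $\rho_\pm$ is meromorphic, the reciprocal $1/(T_\pm\rho_\pm)=W(z)=W(\psi_-(z),\psi_+(z))$ extends to $\C\setminus\si$, and the stated singularities at $M_\pm\cup\breve M_\pm$ are precisely those of the Jost solutions. For {\bf (a)}, multiplying by $\delta_+(z)\delta_-(z)$ cancels the poles of $\psi_\pm$ in $M_\pm$ (and of $\overline{\psi_\pm}$ in $\breve M_\pm$), so $\tilde W$ is holomorphic on $\C\setminus\si$; the reality statement \eqref{6.9} is the analogue of \eqref{2.11} in \cite{emtqps}. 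The zeros of $\tilde W$ outside $\si$ are the eigenvalues, and the formula \eqref{2.11} for $(\tilde W'(\la_k))^2$ is obtained by the standard argument: differentiate the spectral equation in $z$, use that $\tilde\psi_\pm(\la_k,\cdot)\in\ell^2(\Z)$ with $\tilde\psi_+(\la_k,n)=c_k^+\tilde\psi_-(\la_k,n)$, and apply summation by parts (the Jacobi Green's identity) to relate $\tilde W'(\la_k)$ to $\sum_n|\tilde\psi_\pm(\la_k,n)|^2=\ga_{\pm,k}^{-1}$, together with $c_k^+c_k^-=1$. For {\bf (b)}, replacing $\delta_\pm$ by $\hat\delta_\pm$ further removes the square-root singularities at $\hat M_\pm$, giving continuity of $\hat W$ up to $\siu\cup\sil$; the location of its zeros and the estimate \eqref{estim} come from combining \eqref{new3} with the explicit form of $\hat\delta_\pm$ near band edges, noting that at a point of $\pa\si^{(2)}$ lying in the interior of the opposite spectrum the opposite Jost solution is nonvanishing, which forces $\hat W\ne0$ there. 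The normalization {\bf (c)} $T_+(\infty)=T_-(\infty)>0$ follows from \eqref{B4jost} and \eqref{0.13}: the leading asymptotics of $\psi_\pm$ as $z\to\infty$ give $W(z)\to$ a positive constant times a ratio of $K_\pm(n,n)$ factors, and \eqref{2.18} then yields the claim.

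For part {\bf III}, the continuity of $R_\pm$ on $\inte(\sipmul)$ is immediate from \eqref{6.17} once we know $W(\la)=W(\psi_-(\la),\psi_+(\la))\ne0$ there, which is the content of {\bf II(b)} (no virtual levels in the interior of the spectrum). For {\bf (b)}, at a band edge $E\in\pa\si_+\cap\pa\si_-$ with $\hat W(E)\ne0$ I would rewrite $R_\pm=-W(\psi_\mp,\overline{\psi_\pm})/W(\psi_\mp,\psi_\pm)$ in terms of $\hat\psi_\pm$, so that numerator and denominator both have finite nonzero limits; then use \eqref{new3}: if $E\notin\hat M_\pm$ then $\psi_\pm(\la,n)-\overline{\psi_\pm(\la,n)}=o(1)$, so numerator and denominator converge to the same limit and $R_\pm(E)=-1$, whereas if $E\in\hat M_\pm$ the combination $\psi_\pm+\overline{\psi_\pm}$ is the one that is small, forcing $R_\pm(E)=+1$; a careful bookkeeping of which $\hat\delta_\pm$-factor vanishes like $\sqrt{\la-E}$ is needed to see the limits are finite and the ratio is $\pm1$. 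The main obstacle, as in \cite{emtstep}, is the band-edge analysis in {\bf II(b)} and {\bf III(b)}: one must track the interplay between the square-root behaviour of $\psi_\pm$ at points of $\hat M_\pm$, the cancelling zeros in $\hat\delta_\pm$, and — crucially for the steplike case — the different geometry of $\pa\si_+$ versus $\pa\si_-$, distinguishing edges in $\pa\si^{(2)}$, in $\pa\si_\pm^{(1)}$, and in $\pa\si$, since only at the first two types can $\hat W$ vanish. Everywhere else the arguments are routine adaptations of the isospectral case \cite{emtstep} and the equal-background case \cite{emtqps}, to which I would defer for the purely computational parts.
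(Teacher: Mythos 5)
Most of your outline coincides with the paper's proof: part {\bf I} via the Wronskian definitions \eqref{6.17}, \eqref{0.62} and the scattering relations (your Pl\"ucker-identity route to {\bf (c)}, {\bf (d)} is equivalent to the paper's direct computation from \eqref{6.16}), part {\bf II (a)} via the derivative-of-the-Wronskian formula and $c_k^+c_k^-=1$, part {\bf II (c)} from the asymptotics \eqref{B4jost}, and part {\bf III} via the rewritten forms of $R_\pm$ together with \eqref{new3} (your sign $+1$ in the second case of \eqref{P.1} is the intended one). The genuine gap is in {\bf II (b)}, which is the heart of the lemma in the steplike setting. First, you never argue why $W$ cannot vanish at \emph{interior} points of the spectrum: on $\inte(\si^{(2)})$ the paper derives from $\psi_+=c\,\psi_-$ that $W(\psi_+,\overline{\psi_+})=|c|^2W(\psi_-,\overline{\psi_-})$, which by \eqref{0.62} would force $\rho_+$ and $\rho_-$ to have opposite signs, contradicting \eqref{0.10} --- this is precisely where the two different weights $\rho_\pm$ of the two backgrounds enter; on $\inte(\si_\pm^{(1)})$ one needs the reality of $\tilde\psi_\mp$ against the linear independence of $\psi_\pm,\overline{\psi_\pm}$. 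Second, your justification at points $E\in\pa\si^{(2)}\cap\inte(\si_\mp)$ ("the opposite Jost solution is nonvanishing, which forces $\hat W\neq0$") does not work: nonvanishing of a solution never prevents a Wronskian from vanishing. The correct argument uses that $\hat\psi$ at its own band edge takes values that are purely real or purely imaginary, so $\hat W(E)=0$ would make it proportional simultaneously to $\psi_\mp(E,\cdot)$ and $\overline{\psi_\mp(E,\cdot)}$, i.e.\ these two would be linearly dependent at an interior point of $\si_\mp$, which is impossible.

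Third, and most importantly, you give no mechanism for the estimate \eqref{estim}. Saying it "comes from combining \eqref{new3} with the explicit form of $\hat\delta_\pm$" only controls how fast $\hat W$ \emph{can} vanish from above; \eqref{estim} is a \emph{lower} bound on $|\hat W(\la)|$ near a virtual level (it rules out vanishing of order higher than $\sqrt{\la-E}$), and nothing in the band-edge behaviour of the individual Jost solutions gives that. The paper obtains it by feeding the scattering relation \eqref{6.16} back in: using \eqref{2.18} one writes
$\hat\delta_+\hat\psi_-/(\hat\delta_-\rho_+W)=\hat\delta_+\overline{\psi_+}+R_+\hat\psi_+$,
notes the right-hand side is bounded (here boundedness of $R_+$ from {\bf I (b)}, {\bf (c)} is used), picks $n_0$ with $\hat\psi_-(E,n_0)\neq0$, and converts the resulting $O(1)$ bound into $1/\hat W(\la)=O\bigl((\la-E)^{-1/2}\bigr)$ on $\si$ near $E$. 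Without this (or some substitute) step, the later parts of the paper that rely on \eqref{estim} --- the contour argument for the GLM equation and the removability analysis for $G(z,n)$ --- would not go through, so this is not a detail you can defer to \cite{emtqps} or \cite{emtstep}.
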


\begin{proof}
{\bf I}.
The symmetry property {\bf (a)} follows from formulas (\ref{6.17}) and (\ref{symm}).
For {\bf (b)}, use \eqref{6.17} and observe that $\psi_\mp(\la)$ are real valued for
$\la\in\inte(\si_\pm^{(1)})$.
Let $\la\in\inte(\si^{(2)})$.
By (\ref{6.16}),
$$
|T_\pm|^2 W(\psi_\mp,\overline{\psi_\mp}) = (|R_\pm|^2 -1)
W(\psi_\pm,\overline{\psi_\pm}),
$$
and property {\bf (c)} follows from (\ref{0.62}). The
consistency condition {\bf (d)} can be derived directly from
definition (\ref{6.17}).

{\bf II}.
The identity (\ref{2.18}) follows from (\ref{6.17}).
{\bf (a)} The Wronskian inherits the properties of $\psi_\pm(z)$,
so it remains to show (\ref{2.11}).
If $\hat W(z_0)=0$ for $z_0\in\mathbb{C}\setminus\si$, then
\beq\label{D.1}
\tilde\psi_\pm(z_0,n)=c^\pm\tilde\psi_\mp(z_0,n)
\eeq
for some constants $c^\pm$ (depending on $z_0$), which satisfy $c^-c^+ =1$.
In particular, each zero of $\tilde W$ (or $\hat W$) outside the continuous spectrum
is a point of the discrete spectrum of $H$ and vice versa.

Let $\ga_{\pm,j}$ be the norming constants defined in (\ref{norming})
for some point of the discrete spectrum
$\la_j$. By virtue of \cite{tjac}, Lemma~2.4,
\begin{align}\nn  \label{der W q}
\frac{d}{dz}W(\tilde {\psi}_-(z), \tilde{\psi}_+(z)) \Big|_{\la_j} &=
W_n(\tilde {\psi}_-(\la_j), \tfrac{d}{dz}\tilde{\psi}_+(\la_j))
+W_n(\tfrac{d}{dz} \tilde {\psi}_-(\la_j), \tilde{\psi}_+(\la_j))\\
&= - \sum_{k \in \Z} \tilde {\psi}_-(\la_j, k)
\tilde{\psi}_+(\la_j, k)
= - \frac{1}{c_j^{\pm}\gamma_{\pm, j}}.
\end{align}
Since $c_j^-c_j^+=1$, we obtain (\ref{2.11}).

 {\bf (b)} Continuity of $\hat W$ up to the boundary follows from the corresponding
property of $\hat \psi_\pm(z,n)$. We begin with the investigation of
the possible zeros of this function on the spectrum.

First let $\la_0\in\inte(\si^{(2)}):= \si^{(2)}\setminus\pa\si^{(2)}$,
 that is, $\hat\delta_-\neq 0$ and $\hat\delta_+\neq 0$. Suppose
 $W(\la_0) = 0$, then $\psi_+(\la_0,n)=c\,\psi_-(\la_0,n)$ and
$\overline{\psi_+(\la_0,n)}=\bar c\, \overline{\psi_-(\la_0,n)}$,
i.e. $W(\psi_+,\overline {\psi_+})=|c|^2 W(\psi_-,\overline
{\psi_-})$. But this implies opposite signs for $\rho_+, \rho_-$
by (\ref{0.62}), $\sign \rho_+(\la_0)= -\sign$ $\rho_-(\la_0)$,
which contradicts (\ref{0.10}).

Let $\la_0\in\inte(\si^{(1)}_\pm)$ and $\tilde W(\la_0) =0$. The
point $\la_0$ can coincide with a pole $\mu\in M_\mp$. But
$\psi_\pm(\la_0,n)$ and $\overline{\psi_\pm(\la_0,n)}$ are linearly
independent and bounded, and $\tilde\psi_\mp(\la_0,n)\in\mathbb{R}$.
If $W(\la_0)=0$, then
$\tilde\psi_\mp=c_1^{\pm}\,\psi_\pm=c_2^\pm\,\overline{\psi_\pm}$
which implies $W(\psi_\pm,\overline{\psi_\pm})(\la_0)=0$, a contradiction.

In the general mutual location of the background spectra the case
$\la_0=E\in (\pa\si^{(2)}\cap\inte(\si_\pm))\subset\inte(\si)$ can occur. If
$\hat W(E)=0$, then $W( \psi_\pm, \hat\psi_{\mp})(E)=0$, where
$\hat\psi_{\mp}$ are defined by (\ref{0.64}). The values of
$\hat\psi_{\mp}(E,\cdot)$ are either purely real or purely imaginary,
therefore $W( \overline{\psi_\pm}, \hat\psi_{\mp})(E)=0$, that is,
$\overline{\psi_\pm(E,n)}$ and $\psi_\pm(E,n)$ are linearly
dependent, which is impossible at inner points of the set $\si_\pm$.

Thus, the virtual level $\si_v$ of $H$ defined in \eqref{2.363} can only
be located on the set $\pa\si_-\cap\pa\si_+$, that is,
\be\label{in}
\si_v \subseteq \pa\si\cup\big(\pa\si_-^{(1)}\cap\pa\si_+^{(1)}\big).
\ee

To prove \eqref{estim}, take $E\in\si_v$ and assume for example $E\in\si_+$.
By \eqref{6.16} and \eqref{2.18},
\[
\frac{\hat\delta_+(\la)\hat\psi_-(\la,n)}{\hat\delta_-(\la)\rho_+(\la)W(\la)}=
\hat\delta_+(\la) \overline{\psi_+(\la,n)}
+R_+(\la)\hat\psi_+(\la,n).
\]
Choose $n_0$ such that $\hat\psi_-(E,n_0)\neq 0$.
By continuity we also have $\hat\psi_-(\la,n_0)\neq 0$ in a small vicinity of $E$.
Then
\[
\frac{\hat\delta_+(\la) \overline{\psi_+(\la,n_0)}
+R_+(\la)\hat\psi_+(\la,n_0)}{\hat\psi_-(\la,n_0)}=O(1),\quad\la\to
E.
\]
Correspondingly,
\[
\frac{1}{\hat W(\la)}=O\bigg(\frac{\prod_{j=1}^{g_+}(\la -
\mu_j^+)}{\hat\delta^2_+(\la)\sqrt{\la-E}}\bigg)=O\left(\frac{1}{\sqrt{\la
- E}}\right), \quad \la\in\si_+,
\]
which proves \ref{estim}.

{\bf (c)}  Equation (\ref{T infty}) follows from (\ref{B4jost}).

{\bf III}. {\bf (a)} follows from the corresponding properties of
$\psi_\pm(z)$ and from {\bf II}, {\bf (b)}. To show {\bf III}, {\bf (b)} we
use that by (\ref{6.17}) the reflection coefficients have the
representation
\beq\label{2.n}
R_\pm(\la)=-\frac{W(\ov{\psi_\pm(\la)},\psi_\mp(\la))}
{W\left(\psi_\pm(\la),\psi_\mp(\la)\right)}=\mp\frac{W(\ov{\psi_\pm(\la)},
\psi_\mp(\la))}{W(\la)}
\eeq and are continuous on both sides of the
set $\inte(\si_\pm)\setminus (M_\mp\cup\hat M_\mp)$.
Moreover,
$$
|R_\pm(\la)|=\bigg|\frac{W(\ov{\hat\psi_\pm(\la)},\hat\psi_\mp(\la))}{\hat W(\la)}\bigg|,
$$
where the denominator does not vanish on the set $\si_\pm\setminus\si_v$. Hence
$R_\pm(\la)$ are continuous on this set since both numerator and denominator are.

Next, let $E\in\pa\si_\pm \setminus \si_v$ (in particular $\hat
W(E)\ne 0$). Then, if $E\notin \hat M_\pm$, we use (\ref{2.n}) in
the form
\beq \label{2.n1}
R_\pm(\la)= -1 \mp \frac{\hat{\de}_\pm(\la)
W(\psi_\pm(\la)-\ov{\psi_\pm(\la)},\hat\psi_\mp(\la))} {\hat W(\la)},
\eeq
which shows $R_\pm(\la) \to -1$ since
$\psi_\pm(\la)-\ov{\psi_\pm(\la)} \to 0$ by Lemma~\ref{lem2.1}, (2).
This settles the first case in (\ref{P.1}). Similarly, if
$E\in\hat M_\pm$, we use (\ref{2.n}) in the form
\beq \label{2.n2}
R_\pm(\la)= 1 \pm \frac{\hat{\de}_\pm(\la)
W(\psi_\pm(\la)+\ov{\psi_\pm(\la)},\hat\psi_\mp(\la))} {\hat
W(\la)},
\eeq
which shows $R_\pm(\la) \to -1$ since
$\hat{\de}_\pm(\la)=O(\sqrt{\la - E})$ and
$\psi_\pm(\la)+\ov{\psi_\pm(\la)} = o\big(\frac{1}{\sqrt{\la - E}}\big)$
by Lemma~\ref{lem2.1}, (2). This settles the second case in (\ref{P.1}) as well.
 \end{proof}

\section{The Gel'fand-Levitan-Marchenko equation}
\label{secGLM}

The aim of this section is to derive the inverse scattering problem
equation (the Gel'fand-Levitan-Marchenko equation) and to discuss
some additional properties of the scattering data which are
consequences of this equation.

\begin{theorem}
The inverse scattering problem (the GLM) equation has the form
\be \label{glm1 q}
K_\pm(n,m) + \sum_{l=n}^{\pm \infty}K_\pm(n,l)F_\pm(l,m) =
\frac{\delta(n,m)}{K_\pm(n,n)}, \qquad \pm m \geq \pm n,
\ee
where
\begin{align} \nn
F_\pm(m,n) &= \oint_{\sig_\pm} R_\pm(\lam) \psi_q^\pm(\lam,m)
\psi_q^\pm(\lam,n) d\omega_\pm\\ \label{glm2 q} & +
\int_{\sig_\mp^{(1),u}} |T_\mp(\lam)|^2 \psi_q^\pm(\lam,m)
\psi_q^\pm(\lam,n) d\omega_\mp + \sum_{k=1}^p \ga_{\pm,k}
\tilde\psi_q^\pm(\la_k,n) \tilde\psi_q^\pm(\la_k,m).
\end{align}
\end{theorem}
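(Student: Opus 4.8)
The plan is to derive the GLM equation from the scattering relations \eqref{6.16} by projecting onto the complete orthogonal system $\{\psi_q^\pm(\la,\cdot)\}$ of Lemma~\ref{lem1.1}, (3). I focus on the "$+$" case; the "$-$" case is symmetric. Start from the Jost representation \eqref{3.16}, $\psi_+(z,n)=\sum_{m\ge n}K_+(n,m)\psi_q^+(z,m)$, and rewrite the scattering relation $T_+(\la)\psi_-(\la,n)=\overline{\psi_+(\la,n)}+R_+(\la)\psi_+(\la,n)$ on $\si_+^{\mathrm{u,l}}$. The idea is to multiply this identity by $\psi_q^+(\la,m)d\omega_+(\la)$ with $\pm m\ge\pm n$, integrate over the oriented spectrum $\oint_{\si_+}$ as in \eqref{1.141}, and compute each of the four resulting pieces using the orthogonality relation \eqref{1.14}. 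The term $\oint_{\si_+}\overline{\psi_+(\la,n)}\psi_q^+(\la,m)d\omega_+$ expands via \eqref{3.16} and \eqref{0.6} into $\sum_{l\ge n}K_+(n,l)\oint_{\si_+}\overline{\psi_q^+(\la,l)}\psi_q^+(\la,m)d\omega_+ = K_+(n,m)$; the term with $R_+(\la)\psi_+(\la,n)$ gives $\sum_{l\ge n}K_+(n,l)\bigl(\oint_{\si_+}R_+(\la)\psi_q^+(\la,l)\psi_q^+(\la,m)d\omega_+\bigr)$, producing the first summand of $F_+$ in \eqref{glm2 q}.

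The nontrivial input is the left-hand side $\oint_{\si_+}T_+(\la)\psi_-(\la,n)\psi_q^+(\la,m)d\omega_+$. On the part $\si^{(2)}$ of the spectrum one uses $T_+(\la)\psi_-(\la,n)$ together with the symmetry relations {\bf I} of Lemma~\ref{lem2.3} and $1/(T_+\rho_+)=W$; on $\si_+^{(1)}$ the transmission coefficient satisfies $T_+/\overline{T_+}=R_+$ by {\bf I(b)}, so that part recombines into the reflection term. The clean way to organize this is a contour-deformation argument: consider $\oint_{\si_+}$ as a contour integral of a function meromorphic in $\C\setminus\si$, push the contour to infinity, and collect residues. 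The poles inside $\si_d$ contribute, via \eqref{2.11} and the relation $\tilde\psi_+(\la_k,n)=c_k^+\tilde\psi_-(\la_k,n)$, the discrete sum $\sum_k\ga_{+,k}\tilde\psi_q^+(\la_k,n)\tilde\psi_q^+(\la_k,m)$; the jump across $\si_-^{(1)}$ (where $\psi_+$ does not extend but $\psi_-$ does) produces the middle term $\int_{\si_-^{(1),\mathrm{u}}}|T_-(\la)|^2\psi_q^+(\la,m)\psi_q^+(\la,n)d\omega_-$, using {\bf I(c)} to convert the Wronskian jump into $|T_-|^2\rho_-$. The behavior at infinity is controlled by \eqref{B4jost}, \eqref{1.12} and \eqref{T infty}, and it reproduces exactly the right-hand side $\delta(n,m)/K_+(n,n)$ (for $m=n$ the leading asymptotics of $T_+\psi_-\psi_q^+$ contributes $K_+(n,n)^{-1}$, for $m>n$ it decays and contributes nothing).

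The main obstacle is the careful bookkeeping of the singularities of the integrand on $\pa\si_\pm$, in particular at the Dirichlet points $\hat M_\pm$ where $\psi_q^\pm$ and the $\delta_\pm$ factors have square-root behavior, and at virtual levels $\si_v$ where $1/\hat W=O((\la-E)^{-1/2})$ by \eqref{estim}. One must check that all such singularities are integrable so that the contour manipulations are legitimate and no extra boundary contributions arise; this is where Lemma~\ref{lem1.1}(1), Lemma~\ref{lem2.1}(2), and parts {\bf II(b)}, {\bf III} of Lemma~\ref{lem2.3} are used, exactly as in the corresponding argument of \cite{emtqps}. Since the background operators are now allowed to have different spectra, one extra point requires attention compared to \cite{emtstep}: the set $\pa\si^{(2)}\cap\inte(\si_\pm)$ may be nonempty, and one must verify that at such points the integrand of the $\si^{(2)}$-part matches continuously with that of the $\si_\pm^{(1)}$-part so that $\oint_{\si_+}$ splits cleanly into the three terms of \eqref{glm2 q}; this follows from the continuity statements in Lemma~\ref{lem2.3}, {\bf III(b)} together with \eqref{P.1}. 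Once convergence and the matching of pieces are established, the identity \eqref{glm1 q} is just the resulting linear relation, and the estimate \eqref{estimate K q} guarantees that the series in $l$ converges so that the manipulations are term-by-term valid.
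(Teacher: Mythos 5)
Your proposal is correct and is essentially the paper's own argument: since $T_+\psi_-\psi_q^+\,d\omega_+=\tfrac{1}{2\pi\I}\,\psi_-\psi_q^+\,W^{-1}\,d\la$, your projection of the scattering relation plus contour deformation is exactly the paper's evaluation of $\tfrac{1}{2\pi\I}\oint_{\Gamma_\epsilon}\psi_\mp(\la,n)\psi_q^\pm(\la,m)W(\la)^{-1}d\la$ by the residue theorem (eigenvalue residues via \eqref{2.11} and $\tilde\psi_\mp(\la_k)=c_k^\mp\tilde\psi_\pm(\la_k)$, the large circle giving $\delta(n,m)/K_\pm(n,n)$) combined with shrinking the contour onto $\si$, using \eqref{6.16} on $\si_\pm$ and the jump of $\psi_\mp/W$ across $\si_\mp^{(1)}$. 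Only two minor slips: across $\si_-^{(1)}$ it is $\psi_-$ (not $\psi_+$) that fails to extend while $\psi_q^+$ does, and that jump is converted to $|T_-|^2\rho_-\psi_+$ using \textbf{I, (b)} (giving $\re(WT_-)=0$ on $\si_-^{(1)}$), not \textbf{I, (c)}.
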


\begin{proof}
Consider a closed contour $\Gamma_\epsilon$ consisting of a large circular
arc and some contours inside this arc, which envelope the spectrum
$\sigma$ at a small distance $\varepsilon$ from the spectrum.
Let $\pm m \geq\pm n$. The residue theorem, \eqref{1.121},
\eqref{B4jost}, \eqref{2.11}, and equality
$\tilde \psi_\mp(\la_k,n)=c_j^\mp \tilde \psi_\pm(\la_k,n)$ imply
\begin{align} \nn
\frac{1}{2\pi\I} \oint_{\Gamma_\epsilon}
\frac{\psi_\mp(\lam,n)\psi_q^\pm(\lam,m)} {W(\la)} d\lam &=
\frac{\delta(n,m)}{K_\pm(n,n)} + \sum_{k=1}^p
\res_{\la_k}\bigg(\frac{\tilde \psi_\mp(\lam,n) \tilde \psi_q^\pm(\lam,m)}
{\tilde W(\la)} \bigg)\\ \label{4.3} &= \frac{\delta(n,m)}{K_\pm(n,n)} -
\sum_{k=1}^p \ga_{\pm,k} \tilde\psi_\pm(\la_k,n)
\tilde\psi_q^\pm(\la_k,m),
\end{align}
since the integrand is meromorphic on $\C\backslash \si$ with simple
poles at the eigenvalues $\la_k$ and at $\infty$ if $m=n$. It is
continuous till the boundary except at the points
$E\in \partial \sigma_+ \cup \partial \sigma_-$ where
\begin{equation}
\frac{\psi_\mp(\lam,n)\psi_q^\pm(\lam,m)} {W(\la)} = O
\left(\frac{1}{\sqrt{\lam - E}}\right), \quad E \in \partial
\sigma_+ \cup \partial \sigma_-,
\end{equation}
by \eqref{estim}. On the other hand, as $\epsilon \rightarrow 0$,
\begin{align} \label{4.5}
\begin{split}
& \frac{1}{2\pi\I} \oint_{\si}
\frac{\psi_\mp(\lam,n)\psi_q^\pm(\lam,m)}
{W(\lam)} d\lam = \\
&\quad = \frac{1}{2\pi\I} \oint_{\sig_\pm} \frac{\big(\ol{\psi_\pm(\lam,n)}
+ R_\pm(\lam)\psi_\pm(\lam,n)\big)\psi_q^\pm(\lam,m)}
{T_\pm(\lam) W(\lam)} d\lam \\
&\qquad + \frac{1}{2\pi\I} \oint_{\sig_\mp^{(1)}} \frac{\psi_\mp(\lam,n)\psi_q^\pm(\lam,m)}
{W(\lam)} d\lam  \\
&\quad = \oint_{\sig_\pm} \ol{\psi_\pm(\lam,n)}\psi_q^\pm(\lam,m)d\omega^\pm
+ \oint_{\sig_\pm}
R_\pm(\lam)\psi_\pm(\lam,n)\psi_q^\pm(\lam,m)d\omega^\pm\\
&\qquad + \frac{1}{2\pi\I}\int_{\sig_\mp^{(1),u}}\psi_q^\pm(\lam,m)
\bigg(\frac{\psi_\mp(\lam,n)}{W(\lam)} -
\frac{\ol{\psi_\mp(\lam,n)}}{\ol{W(\lam)}}
\bigg)d\lam.
\end{split}
\end{align}
It remains to treat the last integrand. By \eqref{6.16} and Lemma~\ref{lem2.3},~{\bf I},
\begin{equation} \nn
\ol{\psi_\mp(\lam,n)} = T_\mp(\lam)\psi_\pm(\lam,n) - R_\mp(\lam)\psi_\mp(\lam,n)
=  T_\mp(\lam)\psi_\pm(\lam,n) - \frac{T_\mp(\lam)}{\ol{T_\mp(\lam)}}\psi_\mp(\lam,n),
\end{equation}
and therefore
\begin{equation} \nn
\frac{\psi_\mp(n)}{W} -
\frac{\ol{\psi_\mp(n)}}{\ol{W}}
= \frac{\ol{W}\ol{T_\mp} + W T_\mp}{|W|^2\ol{T_\mp}}\psi_\mp(n) -
\frac{T_\mp}{\ol{W}}\psi_\pm(n) = - \frac{T_\mp}{\ol{W}}\psi_\pm(n),
\end{equation}
since $\ol{W}\ol{T_\mp} + W T_\mp = 2 \re(W T_\mp)=0$ on $\sig_\mp$. In summary, \eqref{4.3}
and \eqref{4.5} yield
\begin{align} \nn
\frac{\delta(n,m)}{K_\pm(n,n)} &= K_\pm(n,m)
+ \oint_{\sig_\pm} R_\pm(\lam)\psi_\pm(\lam,n)\psi_q^\pm(\lam,m)d\omega^\pm \\ \nn
&+ \int_{\sig_\mp^{(1),u}} |T_\mp(\lam)|^2\psi_\pm(\lam,n)\psi_q^\pm(\lam,m)d\omega^\mp
+ \sum_{j=1}^p \ga_{\pm,j} \tilde \psi_\pm(\la_j,n) \tilde \psi_q^\pm(\la_j,m),
\end{align}
and applying \eqref{3.16} finishes the proof.
\end{proof}

As it is shown in \cite{emtqps}, the estimate \eqref{estimate K q} for $K_\pm(n,m)$
implies the following estimates for $F_\pm(n,m)$.

\begin{lemma}\label{lem4.1}
The kernel of the GLM equation satisfies the following properties.\\
{\bf IV}. There exist  functions $C_\pm(n)>0$ and $ q_\pm(n)\geq 0$,
$n\in\Z_\pm$, such that $C_\pm(n)$ decay as $n\to\pm\infty$,
$|n| q(n) \in \ell^1(\Z_\pm)$, and
\begin{align} \label{fourest}
\begin{split}
&|F_{\pm}(n, m)|\leq C_\pm(n)\sum_{j=n+m}^{\pm \infty} q(j), \\
&\sum_{n = n_0}^{\pm \infty}|n| \big| F_{\pm}(n,n) -
 F_{\pm}(n \pm 1, n \pm 1)\big| < \infty,    \\
&\sum_{n = n_0}^{\pm \infty}|n| \big|a_q^\pm(n)  F_{\pm}(n,n+1) -
a_q^\pm(n-1)  F_{\pm}(n - 1, n)\big| < \infty.
\end{split}
\end{align}
\end{lemma}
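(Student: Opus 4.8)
The plan is to derive the estimates in Lemma~\ref{lem4.1} directly from the explicit representation \eqref{glm2 q} of $F_\pm$, together with the known estimate \eqref{estimate K q} on the transformation operator coefficients $K_\pm(n,m)$, exactly as in \cite{emtqps}. Since $F_\pm(n,m)$ is built from the Jost-side transformation kernels of the operators $H$ and $H_q^\pm$, the key observation is that $F_\pm$ admits a convolution-type representation in terms of $K_\pm$. Indeed, writing the first term of \eqref{glm2 q} with the scattering relation \eqref{6.16} and expanding $\psi_\pm$ via \eqref{3.16}, the contributions of the reflection and transmission parts can be re-expressed so that $F_\pm(n,m)$ becomes a bilinear expression in the $K_\pm(n,\cdot)$ together with the "free" kernels coming from expanding $\psi_q^\pm$ against the spectral measures $d\omega_\pm$. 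I would first isolate this structural identity and then feed in \eqref{estimate K q}.

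Concretely, the first step is to record the bound $|F_\pm(n,m)| \le C_\pm(n)\sum_{j\ge n+m}^{\pm\infty}q_\pm(j)$. This follows because each of the three summands in \eqref{glm2 q} is, after the manipulation above, a sum over $l$ of terms of the form $K_\pm(n,l)\,\hat F_\pm(l,m)$ (plus a discrete part from the eigenvalues), where $\hat F_\pm$ is the analogous kernel for the \emph{background} scattering data. The background kernel inherits, from the hypothesis \eqref{hypo}, a decay of the form $|\hat F_\pm(l,m)| \le \tilde C\sum_{j\ge l+m}|a(j)-a_q^\pm(j)|+|b(j)-b_q^\pm(j)|$; combining this with \eqref{estimate K q} for $K_\pm(n,l)$ and summing the resulting double sum (which telescopes/dominates to a single tail sum because $K_\pm(n,l)$ itself decays) yields the stated form with $q_\pm(n)$ a suitable majorant of $|a(n)-a_q^\pm(n)|+|b(n)-b_q^\pm(n)|$ and $|n|q_\pm(n)\in\ell^1(\Z_\pm)$ because of \eqref{hypo}. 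The monotone decaying functions $C_\pm(n)$ are produced exactly as the $C_\pm(n)$ in \eqref{estimate K q}.

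For the second and third estimates in \eqref{fourest}, the strategy is to exploit the discrete-derivative structure. Using \eqref{0.13}, the differences $F_\pm(n,n)-F_\pm(n\pm1,n\pm1)$ and $a_q^\pm(n)F_\pm(n,n+1)-a_q^\pm(n-1)F_\pm(n-1,n)$ can be related, via the GLM equation \eqref{glm1 q} itself, to discrete derivatives of $K_\pm(n,n)$ and to $a(n)-a_q^\pm(n)$, $b(n)-b_q^\pm(n)$; more directly, applying $H_q^\pm$ in the variable $n$ (or $m$) to the defining integral \eqref{glm2 q} and using that $\psi_q^\pm(\lam,\cdot)$ solves \eqref{0.12} turns the weight $\lam$ into something that, after integration against $d\omega_\pm$ and the residue bookkeeping, produces the telescoped combinations appearing on the left-hand sides. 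One then bounds these by $|n|$-weighted tails of $q_\pm$ and of $|a(n)-a_q^\pm(n)|+|b(n)-b_q^\pm(n)|$, and invokes \eqref{hypo} to get the $\ell^1$ summability. Since all of this is a verbatim repetition of the argument in \cite{emtqps} — the only structural input being the estimate \eqref{estimate K q}, which has the identical form here — it suffices to cite that reference; the steplike nature and the non-isospectral backgrounds do not affect these particular estimates because they concern only the behavior near $n\to\pm\infty$ on each half-line separately.

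The main obstacle, and the only place one must be slightly careful, is the term $\int_{\sig_\mp^{(1),u}}|T_\mp(\lam)|^2\,\psi_q^\pm(\lam,m)\psi_q^\pm(\lam,n)\,d\omega_\mp$: here the $\psi_q^\pm$ are evaluated on the \emph{opposite} background spectrum $\sig_\mp$, and the integrand involves $|T_\mp|^2$, which need not decay and may have mild ($O((\lam-E)^{-1/2})$) singularities at virtual levels $E\in\si_v$ by Lemma~\ref{lem2.3}~{\bf II}{\bf(b)}. However, these singularities are integrable, $T_\mp(\infty)>0$ is finite by \eqref{T infty}, and the decay in $n,m$ of this term comes entirely from the Riemann–Lebesgue-type oscillation of $\psi_q^\pm(\lam,n)$ in $n$ combined with the smoothness of $|T_\mp|^2\rho_\mp$ away from finitely many points; the resulting decay rate is again governed by \eqref{estimate K q} applied to the representation of $T_\mp$ through Jost solutions. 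This is exactly the point handled in \cite{emtqps}, so I would state the lemma as an immediate consequence of \eqref{estimate K q} and that reference, with at most a remark that the presence of two distinct background spectra changes nothing in the relevant half-line estimates.
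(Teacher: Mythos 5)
Your proposal takes essentially the same route as the paper: the paper disposes of this lemma by observing that, as shown in \cite{emtqps}, the estimate \eqref{estimate K q} for $K_\pm(n,m)$ together with the GLM equation \eqref{glm1 q} (and the relations \eqref{0.13} plus hypothesis \eqref{hypo} for the weighted $\ell^1$ bounds) yields \eqref{fourest}, which is exactly the reduction you propose, and this inversion argument is insensitive to the extra $\si_\mp^{(1)}$ term in \eqref{glm2 q}. One minor correction to your side remark: for $\la\in\inte(\si_\mp^{(1)})$ the functions $\psi_q^\pm(\la,n)$ do not oscillate but decay exponentially in $n$ (such $\la$ lies outside $\si_\pm$), so the decay of that term is not a Riemann--Lebesgue effect --- though this is immaterial, since the GLM-based argument you invoke never needs a direct analysis of that integral.
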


In summary, we have obtained the following necessary conditions for
the scattering data:

\begin{theorem}\label{theor1}
The scattering data
\begin{align}\label{4.6}
\begin{split}
{\mathcal S} &= \Big\{ R_+(\la),\,T_+(\la),\,
\la\in\si_+^{\mathrm{u,l}}; \,
R_-(\la),\,T_-(\la),\, \la\in\si_-^{\mathrm{u,l}};\\
& \qquad \la_1,\dots, \la_p \in\mathbb{R} \setminus (\si_+\cup\si_-),\,
\ga_{\pm, 1}, \dots, \ga_{\pm, p} \in\mathbb{R}_+\Big\}
\end{split}
\end{align}
satisfy the properties {\bf I-III} listed in Lemma~\ref{lem2.3}. The
functions $F_\pm(n,m)$, defined in (\ref{glm2 q}), satisfy property
{\bf IV} in Lemma~\ref{lem4.1}.
\end{theorem}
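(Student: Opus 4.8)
The plan is to assemble the assertions of Theorem~\ref{theor1} as immediate corollaries of the work already done. There is essentially nothing new to prove: the claim splits into two parts, and each part has been established above. First, the scattering data $\mathcal S$ as listed in \eqref{4.6} consists precisely of the objects whose properties are catalogued in Lemma~\ref{lem2.3}. So for the properties labelled \textbf{I--III} one simply invokes that lemma: the symmetry and unitarity relations \textbf{I(a)--(d)} come from \eqref{6.17}, \eqref{symm} and \eqref{0.62}; the analytic continuation of $T_\pm$ and the identity \eqref{2.18}, together with the factorization properties of $\tilde W$ and $\hat W$, their zero structure, the trace-type identity \eqref{2.11} relating $\tfrac{d\tilde W}{dz}(\la_k)$ to the norming constants, the reality statements \eqref{6.9}, the local bound \eqref{estim} near virtual levels, and the normalization \eqref{T infty}, constitute \textbf{II}; and the continuity of $R_\pm$ on $\inte(\si_\pm^{\mathrm{u,l}})$ together with the boundary values \eqref{P.1} at generic band edges is \textbf{III}. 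One should also note explicitly that $\ga_{\pm,k}>0$ by their definition \eqref{norming} as sums of squares of real quantities, and that the $\la_k$ lie in $\R\setminus(\si_+\cup\si_-)$ since they are the zeros of $\tilde W$ outside the continuous spectrum, as shown in the proof of Lemma~\ref{lem2.3}.

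Second, for property \textbf{IV} one argues as follows. The functions $F_\pm(n,m)$ are \emph{defined} by formula \eqref{glm2 q}, which was shown in the Theorem on the GLM equation to coincide with the kernel appearing in \eqref{glm1 q}; the relation between $F_\pm$ and $K_\pm$ given by the GLM equation \eqref{glm1 q}, combined with the estimate \eqref{estimate K q} for the transformation kernels $K_\pm(n,m)$ and the decay/monotonicity properties of the functions $C_\pm(n)$, yields the three estimates in \eqref{fourest}. This implication is exactly the content of the sentence preceding Lemma~\ref{lem4.1}: ``as is shown in \cite{emtqps}, the estimate \eqref{estimate K q} for $K_\pm(n,m)$ implies the following estimates for $F_\pm(n,m)$.'' Thus property \textbf{IV} holds for our $F_\pm$ by the same computation as in the periodic case, since the only input is the estimate on $K_\pm$, which remains valid in the present steplike setting by \eqref{estimate K q}.

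I would therefore write the proof as a short paragraph: \textbf{I--III} are restatements of Lemma~\ref{lem2.3} (with the side remarks on positivity of $\ga_{\pm,k}$ and location of $\la_k$), while \textbf{IV} follows from Lemma~\ref{lem4.1}, whose hypotheses are met because $F_\pm$ is given by \eqref{glm2 q} and $K_\pm$ satisfies \eqref{estimate K q}. There is no substantive obstacle here, since this theorem is by design merely a consolidation of the preceding results into a single statement of necessary conditions on the scattering data; the real work lies upstream (in Lemma~\ref{lem2.3} and the derivation of the GLM equation) and, for sufficiency, downstream in Section~\ref{secINV}. If anything, the only point requiring a word of care is to confirm that the scattering data in \eqref{4.6} indeed comprises exactly the quantities addressed by the cited lemmas — i.e. that nothing in the list has been left without a corresponding property — but this is a matter of bookkeeping rather than mathematics.
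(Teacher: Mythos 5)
Your proposal is correct and matches the paper's treatment: the paper offers no separate proof of Theorem~\ref{theor1}, presenting it explicitly as a summary ("In summary, we have obtained...") whose content is exactly Lemma~\ref{lem2.3} for properties \textbf{I--III} and Lemma~\ref{lem4.1} (via the estimate \eqref{estimate K q} on $K_\pm$, as in \cite{emtqps}) for property \textbf{IV}. Your added remarks on the positivity of $\ga_{\pm,k}$ and the location of the $\la_k$ are consistent with, and slightly more explicit than, the paper's own bookkeeping.
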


In fact, the conditions on the scattering data given in Theorem~\ref{theor1}
are both necessary and sufficient for the solution of the
scattering problem in the class (\ref{hypo}). The sufficiency of
these conditions as well as the algorithm for the solution of the
inverse problem will be discussed in the next section.

\section{The inverse scattering problem}
\label{secINV}

Let $H_q^\pm$ be two arbitrary quasi-periodic Jacobi operators
associated with sequences  $a_q^\pm(n), b_q^\pm(n)$ as introduced in
Section~\ref{secQP}. Let $\mathcal{S}$ be given scattering data with
corresponding kernels $F_\pm(n,m)$ satisfying the necessary
conditions of Theorem~\ref{theor1}.

First we show that the GLM equations (\ref{glm1 q}) can be solved for
$K_\pm(n,m)$ if $F_\pm(n,m)$ are given.
\begin{lemma}\label{lem5.4}
Under condition {\bf IV}, the GLM equations (\ref{glm1 q}) have
unique real-valued solutions $K_\pm(n,\cdot)\in \ell^1(n,\pm\infty)$
satisfying the estimates
\beq \label{5.100}
|K_\pm(n,m)|\leq
C_\pm(n)\sum_{j=\left[\frac{n+m}{2}\right]}^{\pm\infty} q(j),
\quad\pm m>\pm n.
\eeq
Here the functions $q_\pm(n)$ and $C_\pm(n)$ are of the same
type as in (\ref{fourest}).

Moreover, the following estimates are valid
\begin{align} \label{invest}
\begin{split}
&\sum_{n = n_0}^{\pm \infty}|n| \big| K_{\pm}(n,n) -
K_{\pm}(n \pm 1, n \pm 1)\big| < \infty,    \\
&\sum_{n = n_0}^{\pm \infty}|n| \big|a_q^\pm(n)  K_{\pm}(n,n+1) -
a_q^\pm(n-1)  K_{\pm}(n - 1, n)\big| < \infty.
\end{split}
\end{align}
\end{lemma}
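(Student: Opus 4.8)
The plan is to treat the GLM equation \eqref{glm1 q} for fixed $n$ as a linear equation in the Banach space $\ell^1(n,\pm\infty)$ and to apply a Fredholm-type argument. First I would introduce, for each fixed $n\in\Z_\pm$, the operator
\[
(\mathcal{F}_{\pm,n} g)(m) = \sum_{l=n}^{\pm\infty} F_\pm(l,m)\, g(l), \qquad g\in\ell^1(n,\pm\infty),
\]
and use the first estimate in \eqref{fourest} to show that $\mathcal{F}_{\pm,n}$ is a bounded (in fact compact, being a limit of finite-rank truncations) operator on $\ell^1(n,\pm\infty)$, with operator norm tending to $0$ as $n\to\pm\infty$ because $C_\pm(n)\to 0$ and $\sum_{j\ge 2n} q(j)\to 0$. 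Hence for $\pm n$ large, $\id + \mathcal{F}_{\pm,n}$ is invertible by a Neumann series, and the solution $K_\pm(n,\cdot)$ exists, is unique, real-valued (since $F_\pm$ is real), and inherits the estimate \eqref{5.100} directly from the Neumann series together with the convolution-type bound on $F_\pm$; this last point is the standard Marchenko a priori estimate and is carried out exactly as in \cite{emtqps}.

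For the remaining finitely many $n$, I would invoke the Fredholm alternative: it suffices to show that $\id + \mathcal{F}_{\pm,n}$ has trivial kernel. Suppose $g\in\ell^1(n,\pm\infty)$ solves $g + \mathcal{F}_{\pm,n}g = 0$. The key is a positivity argument: using the definition \eqref{glm2 q} of $F_\pm$ in terms of $R_\pm$, $|T_\mp|^2$, and the $\ga_{\pm,k}$, together with the unitarity relations {\bf I}, {\bf (c)}--{\bf (d)} of Lemma~\ref{lem2.3} and the completeness/orthogonality relation \eqref{1.14} for $\psi_q^\pm$, one shows that the quadratic form $\spr{g}{(\id+\mathcal{F}_{\pm,n})g}$ (paired via $d\omega_\pm$ through the transform $g\mapsto \sum_m g(m)\psi_q^\pm(\cdot,m)$) equals a sum of manifestly nonnegative terms --- an integral of $(1-|R_\pm|^2)$-weighted modulus squared over $\si^{(2)}$, a modulus-squared integral over $\si_\pm^{(1)}$, and $\sum_k \ga_{\pm,k}|\cdots|^2$ with $\ga_{\pm,k}>0$ --- plus $\|g\|^2$. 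Vanishing of this form then forces $g=0$. This is the standard argument showing the GLM operator is positive; it goes through verbatim as in the isospectral case \cite{emtstep}, the only new point being to track the two different weights $\rho_\pm$ on $\si^{(1)}_\pm$ versus $\si^{(2)}$, which is precisely what {\bf I}, {\bf (c)} is designed to handle.

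Finally, the two estimates in \eqref{invest} on the differenced diagonal and near-diagonal terms: these follow by subtracting the GLM equation at $(n,n)$ from that at $(n\pm 1, n\pm 1)$ (and similarly for the off-diagonal entries), isolating $K_\pm(n,n)-K_\pm(n\pm1,n\pm1)$, and estimating the resulting expression using the already-established bound \eqref{5.100} on $K_\pm$ together with the second and third parts of \eqref{fourest} for $F_\pm$. One uses here that $K_\pm(n,n)$ stays bounded away from $0$ (which follows from \eqref{5.100} and the GLM equation at $m=n$ for $\pm n$ large, then from invertibility and continuity for the remaining $n$), so that dividing by $K_\pm(n,n)$ is harmless. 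The bookkeeping is routine but tedious; it is identical to the computation in \cite{emtqps} once $\rho_\pm$ are allowed to differ, so I would simply reference that paper for the details.

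I expect the main obstacle to be the positivity/Fredholm step for small $\pm n$: one must set up the pairing against $d\omega_\pm$ carefully so that the cross terms between the $\si^{(2)}$ contribution of $R_\pm$ and the $|T_\mp|^2$ contribution on $\si_\mp^{(1)}$ combine correctly via {\bf I}, {\bf (c)}--{\bf (d)}; everything else (boundedness, compactness, Neumann series for large $n$, the a priori estimate, the differenced estimates) is mechanical.
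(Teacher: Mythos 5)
There is a genuine gap in your uniqueness (trivial-kernel) step. Your claim that ``vanishing of this form then forces $g=0$'' is not correct as stated, because the quadratic form is \emph{not} positive definite: by property \textbf{I}, \textbf{(b)} one has $|R_\pm(\la)|=1$ on $\si_\pm^{(1)}$, so after estimating $|\widehat g|^2+\re\big(R_\pm \widehat g^2\big)\ge (1-|R_\pm|)|\widehat g|^2$ the contribution from $\si_\pm^{(1)}$ vanishes identically and gives no control of $\widehat g$ there. Moreover, the extra manifestly nonnegative term in $F_\pm$ lives on the \emph{opposite} set $\si_\mp^{(1)}$ (it is the integral of $|T_\mp|^2$ against $d\omega_\mp$), not on $\si_\pm^{(1)}$ as you wrote. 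Consequently, the positivity argument only yields that the generalized Fourier transform $\widehat g(\la)=\sum_m \psi_q^\pm(\la,m)g(m)$ vanishes on $\si^{(2)}\cup\si_\mp^{(1)}=\si_\mp$, and says nothing about $\widehat g$ on $\si_\pm^{(1)}$.

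The missing idea, which is the actual crux of the paper's proof, is an analytic-continuation step: since $g\in\ell^1(n,\pm\infty)$ and $\psi_q^\pm(z,\cdot)$ decays on the relevant half-axis for $z\notin\si_\pm$, the function $\widehat g(z)$ extends to a meromorphic function on $\mathbb{C}\setminus\si_\pm$ (indeed, because its boundary values vanish on both sides of $\si^{(2)}$, it continues across $\si^{(2)}$ and is meromorphic on $\mathbb{C}\setminus\si_\pm^{(1)}$). Vanishing on $\si_\mp$, a set with accumulation points inside this domain of analyticity, then forces $\widehat g\equiv 0$ by the identity theorem, and $g=0$ follows from the completeness relation \eqref{1.14}. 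Without this step the Fredholm alternative cannot be closed; note also that this is exactly where the steplike (non-isospectral) situation differs from the case you propose to quote verbatim, since for $\si_+=\si_-$ the troublesome sets $\si_\pm^{(1)}$ are empty. The remaining parts of your outline (boundedness/compactness of the GLM operator, Neumann series for large $\pm n$, the a priori bound \eqref{5.100}, and the differenced estimates \eqref{invest} by comparison with \eqref{fourest}, all as in \cite{emtqps}) are consistent with the paper.
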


\begin{proof}
The solvability of (\ref{glm1 q}) under condition (\ref{fourest})
and the estimates (\ref{5.100}), \eqref{invest} follow
completely analogous to the corresponding
result in \cite[Theorem 7.5]{emtqps}. To prove
uniqueness, first note that the GLM equations are generated by
compact operators. Thus, it is sufficient to prove that the
equation
\beq \label{5.102}
f(m)+\sum_{\ell=n}^{\pm\infty} F_\pm(\ell,m)
f(\ell) =0
\eeq
has only the trivial solution in the space
$\ell^1(n,\pm\infty)$. The proof is similar for the "$+$" and "$-$"
cases, hence we give it only for the "$+$" case. Let $f(\ell)$, $\ell > n$,
be a nontrivial solution of (\ref{5.102}) and set $f(\ell)=0$ for
$\ell \leq n$. Since $F_+(\ell,n)$ is real-valued, we can assume that
$f(\ell)$ is real-valued. Abbreviate by
\beq \label{5.103}
\widehat  f(\la)
=\sum_{m \in \mathbb{Z}} \psi_q^+(\la,m) f(m)
\eeq
the generalized Fourier transform, generated by the spectral
decomposition (\ref{1.14}) (cf.\ \cite{T}).
Recall that $\widehat f(\la)\in L^1_{loc} (\siu_+\cup\sil_+)$.

Multiplying (\ref{5.102}) by $f(m)$, summing over $m \in \Z$,
and applying \eqref{1.14}, (\ref{glm2 q}), (\ref{5.103}), and
condition {\bf I}, {\bf (a)}, we have
\begin{align}
\begin{split}
& 2 \int_{\si_+^u}|\widehat f(\la)|^2 d\omega_+(\la) + 2 \re
\int_{\si_+^u}R_+(\la) \widehat f(\la)^2 d\omega_+(\la) \\ \label{5.104}
& \quad +\int_{\si_-^{(1),u}}\widehat f(\la)^2 |T_-(\la)|^2 d\omega_-(\la)
+ \sum_{k=1}^p \ga_{+,k} \bigg(\sum_{n \in \mathbb{Z}} \tilde \psi_q^+(\la_k,n) f(n) \bigg)^2=0.
\end{split}
\end{align}
The last two summands in (\ref{5.104}) are nonnegative since
$\widehat  f(\la) \in\mathbb{R}$ for $\la\in\si_-^{(1)}$
and $\tilde \psi_q^+(\la_k)\in \R$.
We estimate the first two integrands by
\[
|\widehat f(\la)|^2 + \re R_+(\la) \widehat f(\la)^2 \geq |\widehat f(\la)|^2 - |R_+(\la)
 \widehat f(\la)^2|
\geq \big(1-|R_+(\la)|\big)|\widehat f(\la)|^2
\]
and drop the last summand in (\ref{5.104}), thus obtaining
\beq \label{5.105}
2\int_{\si^{(2),\mathrm{u}}} (1-|R_+(\la)|)|\widehat f(\la)|^2 d\omega_+(\la)
+ \int_{\si_-^{(1),\mathrm{u}}} \widehat f(\la)^2 |T_-(\la)|^2 d\omega_-(\la) \leq 0.
\eeq
Here we also used that
\[
\int_{\si_+^{(1),\mathrm{u}}}(1-|R_+(\la)|) |\widehat f(\la)|^2
d\omega_+(\la) = 0,
\]
which follows from condition {\bf I}, {\bf (b)}. Since $|R_+(\la)|<1$
for $\la\in \inte(\si^{(2)})$ and
$\omega_-(\la)>0$ for $\la\in\inte(\si_-^{(1)})$ we conclude that
\[
\widehat f(\la)=0 \quad \mbox{for}\quad \la\in\si^{(2)}\cup
\si_-^{(1)}= \si_-.
\]
The function $\widehat f(z)$ can be defined by formula (\ref{5.103})
as a meromorphic function on $\mathbb{C}\setminus\si_+$. By our
analysis it is even meromorphic on $\mathbb{C}\setminus\si^{(1)}_+$
and vanishes on $\si_-$. Thus $\widehat f(z)$ and hence also $f(m)$
are equal to zero.
\end{proof}

Next, define the sequences $a_\pm, b_\pm$ by
\begin{align} \label{5.1}
\begin{split}
a_+(n) &= a_q^+(n)\frac{K_+(n+1, n+1)}{K_+(n,n)}, \\
a_-(n) &= a_q^-(n)\frac{K_-(n, n)}{K_-(n+1,n+1)}, \\
b_+(n) &= b_q^+(n) + a_q^+(n)\frac{K_+(n, n+1)}{K_+(n,n)}
- a_q^+(n-1) \frac{K_+(n-1, n)}{K_+(n-1,n-1)}, \\
b_-(n) &= b_q^-(n) + a_q^-(n-1)\frac{K_-(n, n-1)}{K_-(n,n)} - a_q^-(n)
\frac{K_-(n+1, n)}{K_-(n+1,n+1)},
\end{split}
\end{align}
and note that estimate (\ref{invest}) implies
\beq  \label{5.2}
n\left\{|a_\pm(n)-a_q^\pm(n)| + |b_\pm -
b_q^\pm(n)|\right\}\in\ell^1(\Z_\pm).
\eeq

\begin{lemma}\label{lem5.7}
The functions $\psi_\pm(z,n)$, defined by
\beq  \label{Dop}
\psi_\pm(z,n) =\sum_{m=n}^{\pm\infty}
K_\pm(n,m) \psi_q^\pm(z,m),
\eeq
solve the equations
\beq \label{5.3}
a_\pm(n-1)\psi_\pm(z,n-1)+b_\pm(n)\psi_\pm(z,n)+a_\pm(n)\psi_\pm(z,n+1)=z\psi_\pm(z,n),
\eeq
where $a_\pm(n), b_\pm(n)$ are defined by (\ref{5.1}).
\end{lemma}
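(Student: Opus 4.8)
The strategy is to verify directly that the functions $\psi_\pm(z,n)$ defined by the series \eqref{Dop} satisfy the three-term recurrence \eqref{5.3}, working from the GLM equation \eqref{glm1 q} which the kernels $K_\pm(n,m)$ solve by construction, together with the fact that the background Floquet functions $\psi_q^\pm(z,m)$ satisfy the background recurrence $H_q^\pm\psi_q^\pm=z\psi_q^\pm$. I will treat only the ``$+$'' case, the ``$-$'' case being symmetric. First I would record the summation-by-parts identity that expresses the application of the putative operator $H_+$ (built from $a_+,b_+$) to the series \eqref{Dop}: writing $(H_+\psi_+)(z,n) - z\psi_+(z,n)$ and substituting the definition \eqref{5.1} of $a_+,b_+$ in terms of ratios of diagonal and off-diagonal entries of $K_+$, one reorganizes the resulting triple sum using $H_q^+\psi_q^+ = z\psi_q^+$ inside the sum, shifting indices so that all terms are expressed through $\psi_q^+(z,m)$ for $m\ge n-1$. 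The combinations of $K_+$ entries that appear as coefficients are precisely those that occur in the difference of consecutive GLM equations \eqref{glm1 q}, which is the reason the definitions \eqref{5.1} are shaped the way they are.

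The key step is then the following: define the auxiliary quantity
\[
G_+(n,m) := a_+(n-1)K_+(n-1,m) + (b_+(n)-z)K_+(n,m) + a_+(n)K_+(n+1,m) - z\,\text{(correction)},
\]
or more cleanly, show that the coefficient sequences $a_+,b_+$ chosen in \eqref{5.1} are exactly the unique ones for which the ``discrete wave equation with source'' satisfied by $K_+(n,\cdot)$ — obtained by applying the free-type difference expression in the first variable $n$ to the GLM kernel — is compatible across the boundary $m=n$ and $m=n\pm1$. Concretely, one applies the operator $\tau_+ := a_+(n-1)S^- + b_+(n) + a_+(n)S^+$ (shift in $n$) to both sides of \eqref{glm1 q}, uses that $\psi_q^+$ is an eigenfunction of $H_q^+$ to move the spectral variable onto $F_+$, invokes the symmetry $H_q^{+}$-selfadjointness encoded in \eqref{1.14} so that $F_+(l,m)$ behaves correctly under the difference operator in its arguments, and checks that the boundary terms generated at $l=n$ cancel precisely because of the three defining relations for $a_+(n)$ and $b_+(n)$ in \eqref{5.1} (the relation for $a_+$ handles the $l=n$ endpoint, the two relations for $b_+$ handle the $l=n$ and $l=n+1$ endpoints). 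This reduces the claim to the statement that the function $m\mapsto (\tau_+\psi_+)(z,m) - z\,\psi_+(z,m)$, expanded via \eqref{Dop}, has vanishing GLM-coefficients, hence is itself zero since the GLM operator $I + F_+$ is invertible by Lemma~\ref{lem5.4}.

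The main obstacle will be the bookkeeping of the boundary terms in the summation by parts: when the difference operator in $n$ hits the lower limit $n$ of the sum $\sum_{m=n}^{+\infty}$ in \eqref{Dop} and in \eqref{glm1 q}, one generates several terms involving $K_+(n,n)$, $K_+(n,n+1)$, $K_+(n\pm1,n)$, $K_+(n\pm1,n\pm1)$, and these must be matched term by term against the expansion of $a_+(n-1),b_+(n),a_+(n)$ from \eqref{5.1}. The delicate point is that the estimates \eqref{invest}–\eqref{5.2} are needed to guarantee absolute convergence of all the rearranged series (so that the formal manipulations are justified) and that the series defining $\psi_+(z,n)$ can be differentiated/shifted termwise; I would dispatch this at the outset by citing \eqref{5.100} and \eqref{5.2}. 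Once convergence is secured, the algebraic identity is a finite computation whose structure exactly mirrors the derivation of \eqref{0.13} from \eqref{3.16} in the direct problem, so I would organize the proof to run that computation in reverse, pointing to \cite{emtqps} for the identical manipulation in the one-background case and indicating that the only new feature here is that the two half-line constructions are independent and no compatibility between $\psi_+$ and $\psi_-$ is used at this stage.
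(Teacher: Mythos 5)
Your plan is correct and is essentially the paper's own route: the paper's proof simply introduces the transformation operators $(\mathcal{K}_\pm f)(n)=\sum_{m=n}^{\pm\infty}K_\pm(n,m)f(m)$ and cites \cite{emtqps} for the intertwining identity $H_\pm\mathcal{K}_\pm=\mathcal{K}_\pm H_q^\pm$, whose proof is precisely the computation you sketch (apply the difference expression to the GLM equation \eqref{glm1 q}, use that $F_\pm$ intertwines $H_q^\pm$ in its two arguments, match the boundary terms via \eqref{5.1}, and kill the defect by the unique solvability from Lemma~\ref{lem5.4}). You are thus unpacking the citation rather than taking a genuinely different path, and your outline contains no essential gap.
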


\begin{proof}
Consider the two operators\footnote{We don't know that $ H_\pm$ is limit
point at $\mp\infty$ yet, but this will not be used.}
$$
 (H_\pm y)(n)= a_\pm(n-1)y_\pm(n-1)+b_\pm(n)y_\pm(n)+a_\pm(n)y_\pm(n+1),\quad
n\in\mathbb{Z}.
$$
Define two discrete integral operators
$$
\left ({\mathcal K}_\pm f\right)(n) =\sum_{m=n}^{\pm\infty} K_\pm(n,m) f(m).
$$
Then (cf.\ \cite{emtqps})  the following identity is valid
$$
 H_\pm{\mathcal K}_\pm={\mathcal K}_\pm\,H_q^\pm,
$$
which implies (\ref{5.3}).
\end{proof}

The remaining problem is to show that $a_+(n)\equiv a_-(n)$,
$b_+(n)\equiv b_-(n)$ under conditions {\bf II} and {\bf III} on the
scattering data $\mathcal{S}$.

\begin{theorem}\label{theor2}
Let the scattering data ${\mathcal S}$, defined as in (\ref{4.6}),
satisfy conditions {\bf I}, {\bf (a)--(c)}, {\bf II}, {\bf III}, {\bf (a)},
and {\bf IV}. Then each of the GLM equations (\ref{glm1 q}) has
unique solutions $K_\pm(n,m)$, satisfying the estimate
(\ref{invest}). The functions $a_\pm(n), b_\pm(n)$, defined by
(\ref{5.1}), satisfy (\ref{5.2}).

Under the additional conditions  {\bf III}, {\bf (b)} and {\bf I}, {\bf (d)},
these functions coincide, $a_+(n)\equiv a_-(n)=a(n)$, $b_+(n)\equiv
b_-(n)=b(n)$, and the data ${\mathcal S}$ are the scattering data
for the Jacobi operator associated with the sequences $a(n), b(n)$.
\end{theorem}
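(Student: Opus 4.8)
The plan is to establish the two assertions of Theorem~\ref{theor2} in turn. For the first part, the existence, uniqueness, and estimates for $K_\pm(n,m)$ are exactly Lemma~\ref{lem5.4} together with the fact (noted after \eqref{5.2}) that the estimate \eqref{invest} gives \eqref{5.2}; nothing new needs to be done there. So the entire content of the theorem is the second assertion: under the additional symmetry conditions {\bf I},~{\bf (d)} and {\bf III},~{\bf (b)} on $\mathcal{S}$, the two half-line reconstructions glue, i.e.\ $a_+(n)\equiv a_-(n)$ and $b_+(n)\equiv b_-(n)$, and the resulting operator $H$ has exactly $\mathcal{S}$ as its scattering data.

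The strategy for the gluing is the standard Marchenko argument, adapted to the two-sided quasi-periodic setting as in \cite{emtstep}, \cite{emtqps}. First I would introduce the functions $\psi_\pm(z,n)$ via \eqref{Dop}; by Lemma~\ref{lem5.7} they solve the spectral equations \eqref{5.3} for $H_\pm$ respectively, and by construction they have the correct asymptotics (coinciding with $\psi_q^\pm(z,n)$ as $n\to\pm\infty$) and the analyticity/continuity properties inherited from $\psi_q^\pm$, just as in Lemma~\ref{lem2.1}. Next I would run the GLM derivation of Section~\ref{secGLM} \emph{backwards}: using \eqref{glm2 q} for $F_\pm$ together with the GLM equation \eqref{glm1 q}, one recovers a scattering relation of the form \eqref{6.16} connecting $\psi_+(z,n)$ and $\psi_-(z,n)$ on the spectrum, with the given $R_\pm$, $T_\pm$ as coefficients. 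Concretely, substituting \eqref{glm2 q} into \eqref{glm1 q} and using the completeness relation \eqref{1.14} for $\psi_q^\pm$, one identifies the boundary values of the two Jost-type solutions on $\si^{(2)}$ and on the simple-spectrum parts; the consistency conditions {\bf I},~{\bf (c)} and {\bf I},~{\bf (d)} are precisely what is needed for these relations to be mutually compatible (just as, in the proof of the GLM theorem, {\bf I},~{\bf (d)} was used to simplify the cross term). From the scattering relation one then forms the Wronskian $W(\psi_-(z),\psi_+(z))$ and checks, via condition {\bf II}, that it equals $1/(T_\pm(z)\rho_\pm(z))$ and in particular has no zeros inside the spectrum except possibly at the virtual levels, where condition {\bf III},~{\bf (b)} controls the behaviour of $R_\pm$ at band edges and guarantees the $O((\la-E)^{-1/2})$ bound \eqref{estim}.

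The crucial step — and the main obstacle — is then to show that $\psi_+$ and $\psi_-$ are in fact solutions of \emph{one and the same} difference equation, i.e.\ $H_+ = H_-$. The idea is that the scattering relation \eqref{6.16} expresses $\psi_+(\la,n)$ on $\si_-$ as a linear combination of $\psi_-(\la,n)$ and $\overline{\psi_-(\la,n)}$, and symmetrically on $\si_+$; combined with the analyticity of $\psi_\pm$ off their respective spectra and with the discrete-eigenvalue data (the norming constants $\ga_{\pm,k}$ and the coupling $c_k^+c_k^-=1$ built into \eqref{glm2 q} via \eqref{2.11}), this forces $W(\psi_-(z),\psi_+(z))$ to be a function with prescribed analytic structure whose reciprocal, multiplied by $\psi_\mp$, produces — through a residue/contour computation exactly mirroring \eqref{4.3}--\eqref{4.5} — a representation of $\psi_\pm$ that is manifestly a solution of \emph{both} $H_+y=zy$ and $H_-y=zy$. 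Since two Jacobi difference expressions with coefficients decaying to the same quasi-periodic backgrounds that share a common solution for all $z$ must be identical, one concludes $a_+(n)=a_-(n)=:a(n)$, $b_+(n)=b_-(n)=:b(n)$. Here one must be careful about limit-point/limit-circle issues at $\pm\infty$ — but because the perturbation satisfies \eqref{5.2} and the backgrounds $H_q^\pm$ are limit point, $H$ is limit point at both ends and the solution space is genuinely one-dimensional at each end, so the identification is unambiguous. I expect the technical heart of the argument to be verifying that the reconstructed $R_\pm$, $T_\pm$ satisfy all the scattering relations for the \emph{reconstructed} operator (not just the one used as input); this is where {\bf I},~{\bf (d)} re-enters, ensuring the reflection coefficients on the two sides are consistent, and where one invokes uniqueness of the GLM solution (Lemma~\ref{lem5.4}) to close the loop: the scattering data of $H$ satisfy the same GLM equations with the same kernels $F_\pm$, hence give back the same $K_\pm$, hence the same $\mathcal{S}$.

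Finally, having shown $a_+\equiv a_-$ and $b_+\equiv b_-$, it remains to verify that $\mathcal{S}$ really is the scattering data of $H$. For this I would check that the $\psi_\pm(z,n)$ constructed by \eqref{Dop} are the actual Jost solutions of $H$ (they solve the equation by Lemma~\ref{lem5.7} and have the right asymptotics by \eqref{B4jost}), that their Wronskian reproduces $T_\pm$ via \eqref{2.18}, that the scattering relation \eqref{6.16} holds with the prescribed $R_\pm$, and that the poles $\la_k$ of $T_\pm$ (zeros of $\tilde W$) are the eigenvalues with norming constants $\ga_{\pm,k}$, using \eqref{2.11} and \eqref{norming}. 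Each of these is a direct computation given the preceding steps; the substance of the theorem lies entirely in the gluing $a_+\equiv a_-$, $b_+\equiv b_-$ described above.
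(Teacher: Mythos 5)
There is a genuine gap at the heart of your argument. You propose to ``run the GLM derivation backwards'' and recover a scattering relation of the form \eqref{6.16} directly between $\psi_+(z,n)$ and $\psi_-(z,n)$, then form their Wronskian $W(\psi_-,\psi_+)$ and close the argument by a contour computation mirroring \eqref{4.3}--\eqref{4.5}. But at this stage $\psi_+$ and $\psi_-$ solve the equations of two \emph{possibly different} operators $H_+$ and $H_-$ (Lemma~\ref{lem5.7} only gives \eqref{5.3} with $a_\pm,b_\pm$), so there is no scattering relation connecting them, and the quantity $W^\pm(\psi_+,\psi_-)$ is not $n$-independent and cannot be identified with $1/(T_\pm\rho_\pm)$ --- this is exactly the obstruction recorded in Remark~\ref{rem5}. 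Assuming such a relation is assuming what has to be proved. The paper's route is different: from the GLM equation one constructs the surrogate functions $h_\mp(\la,n)$ by \eqref{5.11}, which satisfy $T_\pm h_\mp=\ol{\psi_\pm}+R_\pm\psi_\pm$ (\eqref{5.10}) and whose Wronskian with $\psi_\pm$ \emph{is} $\pm W(z)$ (Lemma~\ref{lem5.1}); then one eliminates $R_\pm$ using {\bf I},~{\bf (c)},~{\bf (d)} to get \eqref{4.26}, forms $G(z,n)=(\psi_+\psi_--h_+h_-)/W$, and proves in Lemma~\ref{lem5.2} that $G$ has no jump across the spectrum and only removable singularities at $\pa\si_\pm\cup\si_d$, whence $G\equiv0$ by Liouville; the asymptotics \eqref{1.12}, \eqref{h infty}, \eqref{T infty} then give $a_+\equiv a_-$, and a second function $G_1$ built from shifted products gives $b_+\equiv b_-$. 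None of this construction (the functions $h_\mp$, the function $G$, the band-edge analysis) appears in your outline, and the band-edge analysis is precisely where {\bf III},~{\bf (b)} is used (via the estimate \eqref{new4} in the cases $\Omega_2$, $\Omega_3$, with separate resonance/non-resonance treatment), not merely to ``guarantee \eqref{estim}'' as you state.

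Two further inaccuracies: your appeal to the limit-point property of $H_\pm$ at \emph{both} ends is unfounded --- \eqref{5.2} controls $a_\pm,b_\pm$ only on the half-axis $\Z_\pm$, so nothing is known about $H_+$ at $-\infty$ (the paper's footnote to Lemma~\ref{lem5.7} explicitly notes this is not known and not needed); and the claim that ``two Jacobi expressions sharing a common solution for all $z$ must be identical'' is not how the coefficients are identified --- the identification comes from comparing the $z\to\infty$ asymptotics of $\psi_+\psi_-$ with $h_+h_-$ (for $a$) and of the shifted products entering $G_1$ (for $b$). The first part of the theorem (existence/uniqueness of $K_\pm$ and \eqref{5.2}) you handle correctly by citing Lemma~\ref{lem5.4}, but the gluing argument, which is the substance of the theorem, is missing its essential mechanism.
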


The proof of Theorem~\ref{theor2} takes up the remaining
section and is split into several lemmas for the convenience of the
reader.

To prove uniqueness of the reconstructed potential we follow the
method proposed in \cite{mar}. Recall that, by Lemma~\ref{lem1.1}
(iii), the functions $\psi_q^\pm(\lam,n)$ form an orthonormal basis
with corresponding generalized Fourier transform. Split the kernel
of the GLM equation (\ref{glm2 q}) into three summands $F_\pm(m,n)=
F_{r,\pm}(m,n) + F_{h,\pm}(m,n) +F_{d,\pm}(m,n)$ and set \be
G_\pm(n, m) := \sum_{l=n}^{\pm \infty}K_\pm(n,l) F_{r,\pm}(l,n). \ee
Then one obtains as in \cite[Theorem 8.2]{emtqps} that the functions
$h_\mp(\la, n)$, defined by
\begin{align} \label{5.11}
\begin{split}
h_\mp(\lam, n) &=  \frac{1}{T_\pm(\lam)}\Bigg(
\frac{\breve{\psi}_q^\pm(\lam,n)}{K_\pm(n, n)} + \sum_{m = n\mp1}^{\mp
\infty}G_\pm(n, m) \breve{\psi}_q^\pm(\lam,m) \\
&\quad \mp \int_{\sig_\mp^{(1),u}} |T_\mp(\xi)|^2 \psi_\pm(\xi,n)
\frac{W_{q,n-1}^\pm(\psi_q^\pm(\xi), \breve{\psi}_q^\pm(\lam))} {\xi-\lam}
d\omega_\mp(\xi)\\
&\quad \pm \sum_{k=1}^p \gamma_{\pm,
k} \tilde \psi_\pm(\lam_k, n) \frac{W_{q,n-1}^\pm(\tilde \psi_q^\pm(\lam_k),
\breve{\psi}_q^\pm(\lam))} {\lam - \lam_k }\Bigg),
\end{split}
\end{align}
satisfy
\be \label{5.10}
T_\pm(\lam)h_\mp(\lam, n) = \ol{\psi_\pm(\lam, n)} + R_\pm(\lam) \psi_\pm(\lam, n),
\quad \lam \in \sig_\pm^{u,l}.
\ee

Despite the fact that $h_\mp(\la,n)$ are defined via the background solutions
corresponding to the opposite half-axis $\mathbb{Z}_\pm$, they share a series
of properties with $\psi_\mp(\la,n)$.
Namely, we prove

\begin{lemma}\label{lem5.1}
Let $h_\mp(z,n)$ be defined by formula (\ref{5.11}) on the set $\sipmul$.
\begin{enumerate}
\item
The functions $\tilde h_\mp(z,n)= \de_\mp(z) h_\mp(z,n)$
admit analytic extensions to the domain $\mathbb{C}\setminus\si$.
\item
The functions $\tilde h_\mp(z,n)$
are continuous up to the boundary $\siul$ except possibly at the points $\pa\si_+\cup\pa\si_-$.
Furthermore,
\begin{align} \label{5.30}
\begin{split}
&\tilde h_\mp(\lau,n)=\tilde h_\mp(\lal,n)\in \R, \qquad
\la\in\mathbb{R}\setminus\si_\mp,\\
& \tilde h_\mp(\lau,n)=\overline{\tilde h_\mp(\lal,n)}, \qquad \la\in\inte(\si_\mp).
\end{split}
\end{align}
\item
For large $z$ the functions $h_\mp(z,n)$ have the following asymptotic
behavior
\be \label{h infty}
h_{\mp}(z, n) = \frac{z^{\pm n}}{K_\pm(n,n)T_\pm(\infty)}
\Big(\vprod{j=0}{n-1}a_q^\pm(j)\Big)^{\mp1}\Big(1+O(\frac{1}{z})\Big),
\quad z \rightarrow \infty.
\ee
\item
We have
\begin{align*}
W^\pm(h_\mp(z),\psi_\pm(z)):=&a_\pm(n)\big(h_\mp(z,n)\psi_\pm(z,n+1)-
h_\mp(z,n+1)\psi_\pm(z,n)\big)\\
\equiv& \pm W(z),
\end{align*}
where $W(z)$ is defined by (\ref{2.18}).
\end{enumerate}
\end{lemma}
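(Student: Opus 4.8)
The plan is to verify each of the four items essentially by combining the defining formula \eqref{5.11}, the GLM equation \eqref{glm1 q}, the estimates from Lemma~\ref{lem4.1} and Lemma~\ref{lem5.4}, together with the known analytic and symmetry properties of the background solutions $\psi_q^\pm$, $\breve\psi_q^\pm$ from Lemma~\ref{lem1.1}. Since the functions $h_\mp$ are built out of $\breve\psi_q^\pm$ (which live only on $\si_\pm^{\mathrm{u,l}}$), the point of (i) is that the \emph{series and integral} on the right-hand side of \eqref{5.11} assemble into something with a genuine analytic continuation, and this is inherited from the structure of the GLM kernel rather than from $\breve\psi_q^\pm$ individually.

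First I would prove (i) and (iii) together. For (iii), the large-$z$ asymptotics: the dominant term in \eqref{5.11} is $\breve\psi_q^\pm(\la,n)/(K_\pm(n,n) T_\pm(\la))$; one inserts the asymptotic expansion \eqref{1.12} for $\breve\psi_q^\pm$ (the other branch of the Baker--Akhiezer function, so $z^{\pm n}$ rather than $z^{\mp n}$), uses $T_\pm(\infty)>0$ from Lemma~\ref{lem2.3}~{\bf II}~{\bf (c)}, and checks that the sum $\sum_{m} G_\pm(n,m)\breve\psi_q^\pm(\la,m)$ and the two remaining terms are of lower order in $z$ --- this last point uses the estimates \eqref{invest}/\eqref{fourest} to control the tails, exactly as in the derivation of \eqref{B4jost}. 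For (i), the cleanest route is to use \eqref{5.10}: the combination $\ol{\psi_\pm(\la,n)}+R_\pm(\la)\psi_\pm(\la,n)$ equals $T_\pm(\la) h_\mp(\la,n)$, and on the other hand, from the scattering relation \eqref{6.16}, this same combination equals $T_\pm(\la)\psi_\mp(\la,n)$ \emph{on} $\si^{(2)}$, while on $\si_\pm^{(1)}$ one argues directly. Thus $\tilde h_\mp(z,n)=\delta_\mp(z)h_\mp(z,n)$ agrees on the spectrum with a function that extends analytically to $\C\setminus\si$, namely $\delta_\mp(z)\psi_\mp(z,n)=\tilde\psi_\mp(z,n)$ away from $\si^{(1)}_\pm$, and on the remaining piece one uses \eqref{5.11} directly: the integral over $\si_\mp^{(1),u}$ with Cauchy kernel $(\xi-\la)^{-1}$ is analytic in $\la$ off $\si_\mp^{(1)}$, the sum over eigenvalues has only the (removable, after multiplying by $\delta_\mp$) poles at $\la_k$, and the rest is a convergent series in $\breve\psi_q^\pm(\la,m)$ whose analytic continuation is supplied by the GLM structure. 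I would phrase this as: $\tilde h_\mp$ coincides on $\sipmul$ with the boundary values of the meromorphic function $\tilde W(z)/(\delta_\pm(z)\rho_\pm(z) T_\pm(z))\cdot(\dots)$ --- more concretely, \eqref{5.10} shows $T_\pm(\la)h_\mp(\la,n)$ has an analytic continuation, and dividing by the analytic continuation of $T_\pm$ gives the claim, with $\delta_\mp$ clearing the poles in $M_\mp$.

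For (ii), continuity up to the boundary away from $\pa\si_+\cup\pa\si_-$ follows from the corresponding continuity of $\psi_q^\pm$, $\breve\psi_q^\pm$ (Lemma~\ref{lem1.1}~(i)) together with uniform convergence of the series, guaranteed by the estimates in Lemma~\ref{lem4.1}/\ref{lem5.4}; the Cauchy-type integral over $\si_\mp^{(1),u}$ is continuous off $\pa\si_\mp^{(1)}$ by Plemelj. The two symmetry relations in \eqref{5.30} are then checked term by term: for $\la\in\R\setminus\si_\mp$ everything ($\breve\psi_q^\pm$, $T_\pm$, $R_\pm$, the $\ga_{\pm,k}$, $\tilde\psi_q^\pm(\la_k)$) is real, so $\tilde h_\mp$ is real; for $\la\in\inte(\si_\mp)$ one uses $\breve\psi_q^\pm(\lau,m)=\ol{\breve\psi_q^\pm(\lal,m)}$ from \eqref{0.6}, $T_\pm(\lau)=\ol{T_\pm(\lal)}$, $R_\pm(\lau)=\ol{R_\pm(\lal)}$ from Lemma~\ref{lem2.3}~{\bf I}~{\bf (a)}, and conjugation-symmetry of the Wronskian terms $W_{q,n-1}^\pm$, to conclude $\tilde h_\mp(\lau,n)=\ol{\tilde h_\mp(\lal,n)}$. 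Finally (iv): $W^\pm(h_\mp(z),\psi_\pm(z))$ is, by \eqref{5.3} (Lemma~\ref{lem5.7}) and the fact that both $h_\mp$ and $\psi_\pm$ solve the same equation \eqref{5.3}, independent of $n$; so it suffices to evaluate it as $z\to\infty$ (or, equivalently, to identify it on the spectrum via \eqref{5.10} and \eqref{6.16} with $\pm W(\psi_\mp,\psi_\pm)=\pm W(z)$). Using the asymptotics \eqref{h infty} for $h_\mp$ and \eqref{B4jost} for $\psi_\pm$, the leading $z$-powers cancel in the Wronskian $a_\pm(n)(h_\mp(z,n)\psi_\pm(z,n+1)-h_\mp(z,n+1)\psi_\pm(z,n))$ and one is left with a constant, which by \eqref{2.18} equals $\pm W(z)$ (note $W(z)$ is the full Wronskian, finite and nonzero generically); tracking the sign through \eqref{2.9}/\eqref{0.62} fixes it to be $+W$ for the lower index $-$ and $-W$ for $+$, matching the stated $\pm W(z)$.

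The main obstacle I anticipate is item (i): proving that the right-hand side of \eqref{5.11}, which a priori is only defined on $\sipmul$ via $\breve\psi_q^\pm$, genuinely extends analytically to all of $\C\setminus\si$. The temptation is to do this termwise, but $\breve\psi_q^\pm(\la,m)$ does \emph{not} extend off $\si_\pm$, so one really must exploit a cancellation: the combination $\tfrac{1}{K_\pm(n,n)}\breve\psi_q^\pm(\la,n)+\sum_m G_\pm(n,m)\breve\psi_q^\pm(\la,m)$ plus the integral correction term is designed so that, after multiplication by $1/T_\pm(\la)$, it equals $\ol{\psi_\pm(\la,n)}+R_\pm(\la)\psi_\pm(\la,n)$ (this is precisely \eqref{5.10}, which we are allowed to assume since it is asserted before the lemma), and the latter has an analytic continuation because it can be rewritten, via the scattering relations, using $\psi_\mp$ and $W(z)$. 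So the honest proof of (i) is: invoke \eqref{5.10}, write $\tilde h_\mp(z,n)=\delta_\mp(z)\bigl(\ol{\psi_\pm(z,n)}+R_\pm(z)\psi_\pm(z,n)\bigr)/T_\pm(z)$ on the spectrum, recognize the numerator --- using \eqref{6.17} and \eqref{2.18} --- as $\delta_\mp(z)W(\psi_\pm(z),\psi_\mp(z))\psi_\pm(z,n)/W(\psi_\pm,\psi_\mp)$-type expressions that continue, and conclude. Getting this identification clean, and in particular handling the interface points in $\pa\si^{(2)}\cap\inte(\si_\pm)$ where the square-root factors $\hat\delta_\pm$ enter, is where the care is needed; everything else is bookkeeping with the estimates from Lemma~\ref{lem4.1}.
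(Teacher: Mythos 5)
Your plan for items (i) and (iv) rests on identifications that are not available at this point of the argument, and this is a genuine gap rather than a presentational issue. You propose to prove (i) by combining \eqref{5.10} with the scattering relation \eqref{6.16} (equivalently, with the Wronskian formulas \eqref{6.17}), so as to recognize $T_\pm(\la)h_\mp(\la,n)=\ol{\psi_\pm(\la,n)}+R_\pm(\la)\psi_\pm(\la,n)$ as $T_\pm(\la)\psi_\mp(\la,n)$ and let $\psi_\mp$ supply the analytic continuation. But in Section~\ref{secINV} the data $R_\pm,T_\pm$ are \emph{given}, and $\psi_+$, $\psi_-$ defined by \eqref{Dop} solve Jacobi equations with possibly \emph{different} coefficients $a_\pm,b_\pm$ (Lemma~\ref{lem5.7}, Remark~\ref{rem5}); there is no operator $H$ yet for which \eqref{6.16}--\eqref{6.17} hold, and establishing essentially that $h_\mp$ can play the role of $\psi_\mp$ is the purpose of the remainder of the section (the function $G(z,n)$, Liouville, etc.). Invoking \eqref{6.16} here is circular. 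Moreover, the premise that forces you into this detour --- that $\breve\psi_q^\pm(\cdot,m)$ ``does not extend off $\si_\pm$'' --- is false: by Lemma~\ref{lem1.1}~(i) the second branch $\breve\psi_q^\pm(z,n)$ is holomorphic on $\C\setminus(\si_\pm\cup\breve M_\pm)$. The paper's proof of (i) is in fact termwise: $\zeta_\mp=\breve\psi_q^\pm/T_\pm$ extends analytically because of the representation \eqref{2.19} of $1/T_\pm$ through $\hat W$ (condition {\bf II} on the data), the same applies to the $G_\pm$-sum; the Cauchy-type integral is analytic off $\si_\mp^{(1)}$ once its integrand is shown integrable at $\pa\si_\mp^{(1)}$ using {\bf II}, {\bf (a)},{\bf (b)} (estimate \eqref{5.131}); and the poles of the eigenvalue sum at $\la_k$ are cancelled by the zeros of $\tilde W$. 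Similarly, for (iv) the paper uses only \eqref{5.10}, \eqref{0.62} and \eqref{2.18}: $W^\pm(h_\mp,\psi_\pm)=T_\pm^{-1}W^\pm(\ol{\psi_\pm},\psi_\pm)=\pm(T_\pm\rho_\pm)^{-1}=\pm W$ on $\si_\pm$, then analytic continuation; no reference to $\psi_\mp$ or to $W^\pm(\psi_\mp,\psi_\pm)$, which may depend on $n$ and is not known to equal $W(z)$.

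Two further problems. In (ii), the first line of \eqref{5.30} covers $\la\in\inte(\si_\pm^{(1)})\subset\R\setminus\si_\mp$, where your termwise reality argument breaks down: there $\breve\psi_q^\pm(\lau,m)$ and $T_\pm(\lau)$ are not real (and $R_\pm$, which does not even occur in \eqref{5.11}, is by {\bf I}, {\bf (b)} a unimodular quantity, not a real one). The paper obtains reality on $\si_\pm^{(1)}$ from \eqref{5.10} together with {\bf I}, {\bf (b)}, namely $h_\mp=\psi_\pm/\ol{T_\pm}+\ol{\psi_\pm}/T_\pm\in\R$ (formula \eqref{5.16}), after first controlling the boundary values of the Cauchy-type integral on $\inte(\si_\mp^{(1)})$ via the Plemelj formula and {\bf II}, {\bf (a)}. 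In (iv), your fallback argument ``constant in $n$, then evaluate at $z\to\infty$'' is also not sound as stated: $W^\pm(h_\mp(z),\psi_\pm(z))$ is not constant in $z$ but grows like $z$ (as does $W(z)=1/(T_\pm\rho_\pm)$, since $\rho_\pm(z)\sim 1/z$), so matching leading asymptotics cannot yield the identity $\equiv\pm W(z)$; in addition, the claim that $h_\mp$ solves \eqref{5.3} has not been established at this stage and would itself have to be deduced from \eqref{5.10}. Item (iii) is fine and agrees with the paper's argument.
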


\begin{remark}\label{rem5}
Note that we did not establish the connection between the function
$W(z)$ and the functions $W^\pm(\psi_+(z,n),\psi_-(z,n))$,
which can depend on $n$, because $\psi_+$ and $\psi_-$ are
the solutions of Jacobi equations corresponding to possibly
different operators $H_+$ and $H_-$.
\end{remark}

\begin{proof}
(i). To show that $\tilde h_\mp(z,n)$ have analytic extensions to
$\mathbb{C}\setminus\si$, we study each term in (\ref{5.11}) separately.

First of all, note that due to the representation
\beq \label{2.19}
T_\pm(z) = \frac{1}{\rho_\pm(z) W(z)} =
\frac{\hat\de_\mp(z)}{\breve \de_\pm(z)}
\frac{\sqrt{\prod_{j=0}^{2g_\pm+1} (z-E_j^\pm)}}{\hat W(z)},
\eeq
the functions $\tilde\zeta_\mp(z,n)= \de_\mp(z) \zeta_\mp(z,n)$, where
\beq \label{5.141}
\zeta_\mp(z,n):=\frac{\breve{\psi}_q^\pm(z,n)}{T_\pm(z)},
\eeq
can be continued analytically to $\mathbb{C}\setminus\si$. This also
holds for the second term since $G_\pm(n,\cdot)\in \ell^1(\mathbb{Z})$ are
real-valued.

Next we discuss the properties of the Cauchy-type integral in the
representation (\ref{5.11}). We represent the third summand in (\ref{5.11})
multiplied by $T_\pm^{-1}(z)$ as
\beq \label{5.123}
\Theta_\mp(z,n):=\mp \frac{1}{2\pi\I} \int_{\si_\mp^{(1),\mathrm{u}}}
\theta_\mp(z,\xi,n)\frac{d\xi}{\xi - z},
\eeq
where
\begin{align}\nn
\theta_\mp(z,\xi,n) &= -\frac{\de_\mp(\xi)^2}{\rho_\mp(\xi) |\tilde
W(\xi)|^2} \tilde\psi_\pm(\xi,n)
W_{q,n-1}^\pm(\tilde\psi_q^\pm(\xi,\cdot), \zeta_\mp(z,\cdot))\\
\label{5.13} & = -\frac{|\hat\de_\mp(\xi)|^2}{\rho_\mp(\xi) |\hat
W(\xi)|^2} \frac{|\hat\de_\pm(\xi)|^2}{\hat\de_\pm(\xi)^2}
\hat\psi_\pm(\xi,n) W_{q,n-1}^\pm(\hat\psi_q^\pm(\xi,\cdot),
\zeta_\mp(z,\cdot)).
\end{align}
By property {\bf II}, {\bf (a)} the function $\hat W(\xi)$ has no zeros in the interior of
$\si_\mp^{(1),\mathrm{u}}$. Thus, for $z\notin \si_\mp^{(1)}$, the functions $\theta_\mp(z,.,n)$
are bounded in the interior of $\si_\mp^{(1)}$ and the only possible singularities can arise
at  the boundary. We claim
\beq\label{5.131}
\theta_\mp(z,\xi,n)= \begin{cases}
O(\sqrt{\xi - E}) & \mbox{ for } E\notin\si_v,\\
O\big(\frac{1}{\sqrt{\xi - E}}\big) & \mbox{ for } E\in\si_v,
\end{cases}
\qquad E\in\pa\si_\mp^{(1)},\; z\neq E.
\eeq
This follows from $\frac{|\hat\delta_\mp(\xi)|^2}{\rho_\mp(\xi)}=O(\sqrt{\xi - E})$
together with $\hat W(\xi) = O(1)$ if $E\notin\si_v$ and
$1/\hat W(\xi) = O(1/\sqrt{\xi - E})$ by {\bf II}, {\bf (b)} if $E\in\si_v$.
Therefore, $\theta_\mp$ are integrable and the third summand of (\ref{5.11})
also inherits the properties of $\zeta_\mp(z,n)$.

Finally, the last summand in (\ref{5.11}) again inherits the properties of
$\tilde\zeta_\mp(z,n)$ except for possible additional poles at the eigenvalues $\la_k$.
However, these cancel with the zeros of $\tilde W(z)$ at $z=\lam_k$.

(ii). We consider the boundary values next. The only nontrivial term is of course
the Cauchy-type integral (\ref{5.123}) as $z\to\la\in\inte(\si_\mp^{(1)})$.
First of all observe that by \eqref{2.9} and \eqref{2.18},
\[
\frac{W_{q,n-1}^\pm(\tilde\psi_q^\pm(\la),\breve\psi_q^\pm(z))}{T_\pm(z)} \to (\de_\pm W)(\la),
\]
where the functions $\de_\pm W$ are bounded and nonzero for $\la\in\inte(\si_\mp^{(1)})$ by
{\bf II}, {\bf (a)}. Hence the Plemelj formula applied to (\ref{5.123}) gives
\[
\Theta_\mp(\la,n) = \pm \frac{\tilde\psi_\pm(\la,n)}{2\de_\pm(\la)
\rho_\mp(\la) \overline{W(\la)}} \mp
\dashint_{\si_\mp^{(1),
\mathrm{u}}}\frac{\theta_\mp(\la,\xi,n)}{\xi - \la}d\xi, \quad
\la\in\inte(\si_\mp^{(1),\mathrm{u}}),
\]
where both terms are finite. Here $\dashint$ denotes the principle value integral.
Therefore, the boundary values away from $\pa\si_+\cup\pa\si_-$ exist and we have
\beq \label{5.14}
h_\mp(\lau,n)= \overline{h_\mp(\lal,n)}, \quad\la\in\si_+\cup\si_-.
\eeq
By property {\bf I}, {\bf (b)},
\beq \label{5.16}
h_\mp=T_\pm^{-1}\left(R_\pm\psi_\pm +\overline{\psi_\pm}\right) =
\frac{\psi_\pm}{\ov{T_\pm}} +\frac{\overline{\psi_\pm}}{T_\pm}\in\mathbb{R},
\quad  \la\in\si_\pm^{(1)},
\eeq
from which
\beq \label{5.15}
h_\mp(\lau,n)= h_\mp(\lal,n) ,\quad \la\in\si_\pm^{(1)},
\eeq
follows. Combining (\ref{5.14}) and (\ref{5.15}) yields (\ref{5.30}).

(iii). Since the last two terms in (\ref{5.11}) are $O(z^{-1})$, the asymptotic behavior
follows from \eqref{B4jost} and {\bf II}, {\bf (c)}.

(iv). From (\ref{5.10}), \eqref{0.62}, and (\ref{2.18}) we obtain
\beq \nonumber
W^\pm(h_\mp(\la), \psi_\pm(\la))=\frac{W^\pm(\ol{{\psi}_\pm(\la)}, \psi_\pm(\la))}{T_\pm(\la)} =
\frac{1}{T_\pm(\la)\rho_\pm(\la)} = \pm W(\la), \quad \la\in \si_\pm.
\eeq
Hence equality holds for all $z\in\mathbb{C}$ by analytic continuation.
\end{proof}

\begin{corollary}\label{col7}
The functions $\tilde h_\mp(z,n)$ admit analytic extensions to
$\mathbb{C}\setminus\si_\mp$.
\end{corollary}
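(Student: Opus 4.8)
The plan is to bootstrap from Lemma~\ref{lem5.1}~(i), which gives analytic extendability of $\tilde h_\mp(z,n)$ to $\mathbb{C}\setminus\si$, by showing that the boundary values across the one-multiplicity part $\si_\mp^{(1)}$ of the ``opposite'' spectrum actually match, so that there is no jump there and the extension is in fact analytic across $\inte(\si_\mp^{(1)})$ and, after accounting for the band edges, on all of $\mathbb{C}\setminus\si_\mp$. Concretely: by \eqref{2.5} we have $\si = \si_\mp \cup \si_\pm^{(1)}$, with $\si_\pm^{(1)}\setminus\si_\mp = \inte(\si_\pm^{(1)})\setminus\si_\mp$ (up to band edges), so removing analyticity obstructions on $\si_\pm^{(1)}$ is exactly what upgrades $\mathbb{C}\setminus\si$ to $\mathbb{C}\setminus\si_\mp$.

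First I would invoke \eqref{5.15} from the proof of Lemma~\ref{lem5.1}, namely $h_\mp(\lau,n) = h_\mp(\lal,n)$ for $\la\in\si_\pm^{(1)}$, which came from the representation \eqref{5.16} showing $h_\mp$ is real on $\si_\pm^{(1)}$. Since $\de_\mp(z)$ is a real polynomial (its zeros $\mu_j^\mp \in M_\mp$ lie in $\R\setminus\si_\mp$, hence are real), it has equal boundary values from above and below on $\si_\pm^{(1)}$, and therefore $\tilde h_\mp(\lau,n) = \tilde h_\mp(\lal,n)$ on $\inte(\si_\pm^{(1)})\setminus\si_\mp$. Combined with the continuity of $\tilde h_\mp(z,n)$ up to the boundary away from $\pa\si_+\cup\pa\si_-$ (Lemma~\ref{lem5.1}~(ii)), Morera's theorem (or the edge-of-the-wedge / Schwarz reflection principle) then shows $\tilde h_\mp(z,n)$ extends analytically across $\inte(\si_\pm^{(1)})\setminus\si_\mp$.

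It remains to handle the finitely many band-edge points in $\pa\si_\pm^{(1)}$ that do not lie in $\si_\mp$. At such a point $E$, the potential singularity of $\tilde h_\mp(z,n)$ is at worst a square-root type singularity coming through $\zeta_\mp(z,n) = \breve\psi_q^\pm(z,n)/T_\pm(z)$ and the Cauchy integral $\Theta_\mp(z,n)$; by \eqref{2.19}, $1/T_\pm(z)$ contributes the factor $\sqrt{\prod_{j}(z-E_j^\pm)}/\hat\de_\mp(z)$ times bounded terms, and $\breve\psi_q^\pm(z,n)$ itself is bounded near $E\in\pa\si_\pm$ when $E\notin\hat M_\pm$ (or $O((z-E)^{-1/2})$ when $E\in\hat M_\pm$, which is then compensated by $\hat\de_\pm$ appearing in $\breve\de_\pm$). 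In all cases the singularity is an isolated bounded one once one multiplies out the explicit radicals, hence removable by Riemann's theorem; the Cauchy integral $\Theta_\mp$ is similarly $O(1)$ near such edges by the estimates \eqref{5.131} with $E\notin\si_v$ (recall $\si_v\subseteq\pa\si\cup(\pa\si_+^{(1)}\cap\pa\si_-^{(1)})$ and the edges in question lie in $\pa\si_\pm^{(1)}\setminus\si_\mp$, hence not in $\si_v$). Therefore $\tilde h_\mp(z,n)$ is analytic on all of $\mathbb{C}\setminus\si_\mp$.

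The main obstacle I anticipate is the careful bookkeeping at the band edges of $\si_\pm^{(1)}$: one has to verify that every factor producing a fractional power — in $\breve\psi_q^\pm$, in $1/T_\pm$ via $\hat\de_\mp$ and $\hat\de_\pm$, and in the weight $\de_\mp^2/\rho_\mp$ inside $\theta_\mp$ — combines to leave only a removable singularity, using precisely the estimates \eqref{new7}, \eqref{new3}, \eqref{5.131} and property {\bf II}, {\bf (b)}. The generic-case reduction ($E\notin\si_v$) is what makes this clean; the reflection coefficient identities play no role here since we are off $\si_\mp$.
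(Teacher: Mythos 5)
Your proposal is correct and follows essentially the same route as the paper: analyticity on $\C\setminus\si$ from Lemma~\ref{lem5.1}~(i), the matching boundary values \eqref{5.30} (i.e.\ \eqref{5.15}/\eqref{5.16}) to remove the jump across $\inte(\si_\pm^{(1)})$, and a square-root bound at the edges $E\in\pa\si_\pm^{(1)}\cap\pa\si$ — in the paper obtained from the identity \eqref{5.142}, a rewriting of \eqref{2.19} — which makes those isolated singularities removable. Two of your parenthetical justifications are inaccurate but harmless: such edges lie in $\pa\si$ and so may well belong to $\si_v$, and $\tilde h_\mp$ is in general only $O\big((z-E)^{-1/2}\big)$ rather than bounded there; neither matters, since the $O\big((z-E)^{-1/2}\big)$ estimate coming from \eqref{5.142} holds irrespective of whether $\hat W(E)=0$ and already suffices for removability by Riemann's theorem.
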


\begin{proof}
Property (i) of Lemma \ref{lem5.1} holds for
$z\in\mathbb{C}\setminus \si$. Relation (\ref{5.30}) implies that
$\tilde h_\mp$ have no jumps across $z\in\inte(\si_\pm^{(1)})$. To
finish the proof we need to show that the possible remaining
singularities at $E\in\pa\si_\pm^{(1)}\cap\pa\si$ are removable.
This follows from (cf.\ \eqref{2.19})
\beq \label{5.142}
\hat\zeta_\mp(z,n)= \frac{\hat W(z)}{\sqrt{\prod_{j=0}^{2g_\pm+1}
(z-E_j^\pm)}} \breve\de_\pm(z) \breve\psi_q^\pm(z,n)
\eeq
which shows
$\tilde\zeta_\mp(z,n)=O((z-E)^{-1/2})$ and hence
$\tilde h_\mp(z,n)= O((z-E)^{-1/2})$ for $E\in\si_\pm^{(1)}\cap\pa\si$.

However, let us emphasize at this point that the behavior of
$h_\pm(z,n)$ at the remaining edges is a more subtle question
to be discussed later.
\end{proof}

Eliminating $\overline{\psi_\pm}$ from
$$
\left\{\begin{array}{lll}
\overline{ R_\pm(\la)}\,\overline{\psi_\pm(\la,n)} + \psi_\pm(\la,n)&=&\overline{h_\mp(\la,n)}\,
\overline{T_\pm(\la)}\\[2mm]
R_\pm(\la)\,\psi_\pm(\la,n) + \overline{\psi_\pm(\la,n)} &=&h_\mp(\la,n)\,T_\pm(\la)
\end{array}\right.
$$
yields
$$
\psi_\pm(\la,n)\left(1 - |R_\pm(\la)|^2\right) =\overline{h_\mp(\la,n)}\,
\overline{T_\pm(\la)} -
\overline{R_\pm(\la)}\,h_\mp(\la,n)\,T_\pm(\la).
$$
We apply {\bf I}, {\bf (c)}, {\bf II}, and the consistency condition
{\bf I}, {\bf (d)} to obtain
\begin{align} \nonumber
T_\mp(\la)\psi_\pm(\la,n) &=\overline{h_\mp(\la,n)} - \frac{\overline{R_\pm(\la)}
T_\pm(\la)}{\ov{T_\pm(\la)}} h_\mp(\la,n)\\ \label{4.26}
&=\overline{h_\mp(\la,n)} + R_\mp(\la) h_\mp(\la,n),
\quad\la\in\si^{(2)}.
\end{align}
This equation together with (\ref{5.10}) gives us a system from
which we can eliminate the reflection coefficients $R_\pm$. We obtain
\beq\label{5.20}
T_\pm(\la) \big(\psi_\pm(\la)\psi_\mp(\la) -
h_\pm(\la)h_\mp(\la)\big)=
\psi_\pm(\la)\overline{h_\pm(\la)} -
\overline{\psi_\pm(\la)}h_\pm(\la), \quad
\la\in\si^{(2),\mathrm{u,l}}.
\eeq
Now introduce the function
\beq \label{5.211}
G(z):= G(z,n) = \frac{\psi_+(z,n)\psi_-(z,n) - h_+(z,n)h_-(z,n)}{W(z)}
\eeq
which is well defined in the domain $z\in \mathbb{C}\setminus
\left(\si\cup\si_d\cup M_+\cup M_- \right)$.
By (\ref{5.20}) and \eqref{2.18},
\beq \label{5.22}
G(\la) = \left(\psi_\pm(\la)\overline{h_\pm(\la)} -
\overline{\psi_\pm(\la)}h_\pm(\la)\right) \rho_\pm(\la),
\quad\la\in\si^{(2),\mathrm{u,l}},
\eeq
so we need to study the properties of $G(z,n)$ as a function of $z$.
Our aim  is to prove that $G(z,n)=0$, which will follow
from the next lemma.

\begin{lemma}\label{lem5.2}
The function $G(z,n)$, defined by \eqref{5.211}, has the following properties.
\begin{enumerate}
\item
$G(\lau,n) =  G(\lal,n)\in\mathbb{R}$ for $\la\in\mathbb{R}\setminus (\pa\si_-\cup\pa\si_+\cup\si_d)$.

\item
It has removable singularities at the points $\pa\si_-\cup\pa\si_+\cup\si_d$,
where $\si_d:=\{\la_1,...,\la_p\}$.
\end{enumerate}
\end{lemma}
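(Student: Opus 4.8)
The plan is to establish both assertions by combining the two complementary descriptions of $G(z,n)$: the "global" formula \eqref{5.211} via Jost and $h$-solutions, which shows analyticity away from a small exceptional set, and the "boundary" formula \eqref{5.22} on $\si^{(2)}$, which controls the jump behavior. Throughout, $z_0$ will denote a candidate singular point, and the key principle is that any isolated singularity which is simultaneously bounded (or mild enough) and compatible with the symmetry relations must be removable.

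\smallskip

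\textbf{Symmetry and reality (part (i)).} First I would verify $G(\lau,n)=G(\lal,n)\in\R$ on $\R\setminus(\pa\si_-\cup\pa\si_+\cup\si_d)$ by splitting into the relevant pieces of the real line. On $\R\setminus\si$ the functions $\psi_\pm(z,n)$ are real-valued (Lemma~\ref{lem2.1}, (1)) and, by the reality part of Lemma~\ref{lem5.1}, (ii) (relation \eqref{5.30}), the functions $\tilde h_\mp(z,n)$ are also real there; since $W(z)$ is real on $\R\setminus\si$ by \eqref{6.9} and the $\de_\pm$ factors used in passing between $h$ and $\tilde h$ are real polynomials, formula \eqref{5.211} gives $G(z,n)\in\R$ directly. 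On $\inte(\si^{(2)})$ I would instead use \eqref{5.22}: the expression $\psi_\pm\overline{h_\pm}-\overline{\psi_\pm}h_\pm$ is $2\I\,\im(\psi_\pm\overline{h_\pm})$, purely imaginary, while $\rho_\pm(\lau)$ is purely imaginary on $\si_\pm$ by \eqref{0.10}, so the product is real; the equality of upper and lower boundary values follows because complex conjugation interchanges $\lau\leftrightarrow\lal$ for both $\psi_\pm$ (by \eqref{symm}) and $h_\pm$ (by \eqref{5.14}) while $\rho_\pm(\lal)=\overline{\rho_\pm(\lau)}$. Finally on $\inte(\si_\pm^{(1)})$ the solutions $\psi_\mp$ and $h_\pm$ are real-valued (for $h_\pm$, by \eqref{5.16}), and $W(z)=W(\psi_-,\psi_+)$ together with Lemma~\ref{lem5.1}, (iv) shows $G$ has no jump: indeed \eqref{5.211} and \eqref{5.15} give $G(\lau,n)=G(\lal,n)$, and reality is immediate since every factor is real. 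This covers all of $\R\setminus(\pa\si_-\cup\pa\si_+\cup\si_d)$; for $z\in\C\setminus\si$ off the real axis the reality statement is vacuous and only analyticity (already guaranteed by \eqref{5.211}, Lemma~\ref{lem2.1}, Corollary~\ref{col7}, away from $M_\pm$) is at issue, with the poles at $M_+\cup M_-$ treated as in part (ii).

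\smallskip

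\textbf{Removable singularities (part (ii)).} The candidate singularities are of four kinds, and I would dispatch them in increasing order of delicacy. (a) At a pole $\mu\in M_+$ (similarly $M_-$): $\psi_+(z,n)$ and $h_+(z,n)$ each have at worst a simple pole there, coming from the factor $\de_+(z)^{-1}$, so $\psi_+(z,n)\psi_-(z,n)$ and $h_+(z,n)h_-(z,n)$ have at worst a simple pole, while $W(z)=\tilde W(z)/(\de_+\de_-)$ also has a simple pole at $\mu$; thus $G$ is bounded near $\mu$, hence the singularity is removable. (Alternatively, pass to $\tilde\psi_\pm,\tilde h_\pm,\tilde W$ and cancel the common $\de$-factors.) (b) At an eigenvalue $\la_k\in\si_d$: here $W(z)$ has a simple zero (Lemma~\ref{lem2.3}, {\bf II}, {\bf (a)}), but the numerator $\psi_+\psi_--h_+h_-$ vanishes there too — because at $z=\la_k$ one has $\tilde\psi_+(\la_k,n)=c_k^+\tilde\psi_-(\la_k,n)$ and, by \eqref{5.10} together with the vanishing of $T_\pm^{-1}$-type factors at a zero of $W$, the functions $h_\mp$ are likewise proportional to $\psi_\mp$ at $\la_k$ with reciprocal constant, so the $2\times2$ "determinant" $\psi_+\psi_--h_+h_-$ vanishes. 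A first-order zero over a first-order zero gives a removable singularity; I would make the matching of leading coefficients precise using \eqref{2.11} and the relation $c_k^+c_k^-=1$. (c) At a point $E\in\pa\si_\pm^{(1)}$ that is \emph{not} a virtual level ($\hat W(E)\ne 0$): by Corollary~\ref{col7} the functions $\tilde h_\mp(z,n)$ extend analytically across such $E$, and likewise $\tilde\psi_\pm$ via the $\hat\psi$/$\hat\de$ mechanism of Lemma~\ref{lem2.1}; the worst behavior is $O((z-E)^{-1/2})$ from the square-root factors, so the numerator of $G$ is $O((z-E)^{-1})$ while $1/W$ is bounded (since $\hat W(E)\ne 0$), giving $G=O((z-E)^{-1})$, and an isolated singularity of a single-valued analytic function with at most first-order pole paired with the reality/no-jump property from part (i) is actually removable once one checks the $(z-E)^{-1/2}$-terms cancel — which they do because the numerator is a Wronskian-type antisymmetric combination in which the leading singular parts of $\psi_+$ and $h_+$ are proportional (both governed by the single branch-point behavior \eqref{new7}). (d) At a virtual level $E\in\si_v$, or more generally at $E\in\pa\si\cup(\pa\si_+^{(1)}\cap\pa\si_-^{(1)})$ where $\hat W(E)=0$: this is the main obstacle. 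Here $1/\hat W(\la)=O(1/\sqrt{\la-E})$ by \eqref{estim}, so a naive bound gives $G=O((z-E)^{-3/2})$, which is not obviously removable. The resolution is again that the numerator $\psi_+\psi_--h_+h_-$ is not a generic product but the natural antisymmetric (Wronskian-like) pairing of two solutions that become \emph{linearly dependent} at $E$ — at a virtual level $\tilde\psi_+(E,n)$ and $\tilde\psi_-(E,n)$ are proportional, and correspondingly the leading singular parts of $h_\pm$ align with those of $\psi_\pm$ — so the dangerous $(z-E)^{-1/2}$ singularities of $\psi_\pm$ and $h_\pm$ cancel in the numerator, leaving a zero that compensates the zero of $W$. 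I would prove this by writing, near such $E$, $\hat\psi_\pm(z,n)=\hat\psi_\pm(E,n)+O(\sqrt{z-E})$ (using \eqref{new7} and Lemma~\ref{lem2.1}) and an analogous expansion for $\hat h_\mp$, substituting into the numerator, and checking that the constant and $(z-E)^{-1/2}$ coefficients drop out because of the proportionality $\hat\psi_+(E,n)\parallel\hat\psi_-(E,n)$ and $\hat h_+(E,n)\parallel\hat h_-(E,n)$; combined with the already-proven fact from part (i) that $G$ has real, jump-free boundary values near $E$, the remaining at-worst-square-root singularity of a single-valued function must be removable. The careful bookkeeping of these cancellations, in all the sub-cases of the "general mutual location" of $\si_+$ and $\si_-$, is where the real work lies.
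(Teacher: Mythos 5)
Your overall skeleton (reality/no-jump away from the exceptional set, then order estimates at isolated singularities) matches the paper, and your treatment of $\R\setminus\si$, of $\inte(\si^{(2)})$ via \eqref{5.22}, and of the poles at $M_\pm$ via the $\de$-cancellation \eqref{5.25} is fine. But there are two genuine gaps. First, on $\inte(\si_\pm^{(1)})$ your part (i) collapses: by \eqref{5.16} it is $h_\mp$ (not $h_\pm$) that is real there, and the factors $\psi_\pm$, $h_\pm$, $W$ all have jumps across $\si_\pm^{(1)}\subset\si_\pm$, so ``reality is immediate since every factor is real'' is false and \eqref{5.15}, Lemma~\ref{lem5.1}~(iv) do not give the conclusion. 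The actual content of part (i) is to show that the jumps of $\psi_+\psi_-/W$ and of $h_+h_-/W$ across $\inte(\si_\mp^{(1)})$ coincide; the paper computes $\big[\psi_+\psi_-/W\big]=\rho_\mp\psi_\pm h_\pm|T_\mp|^2$ from \eqref{2.18}, {\bf I(b)} and \eqref{5.10}, and gets the same value for $\big[h_+h_-/W\big]$ by applying the Plemelj formula to the Cauchy-type integral in \eqref{5.11} (the only non-real term of $h_\mp$ there). This computation is absent from your argument.

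Second, in part (ii) your cancellation mechanism rests on proportionalities that are not available in the inverse problem. You invoke $\tilde\psi_+(\la_k,n)=c_k^+\tilde\psi_-(\la_k,n)$ at eigenvalues and ``$\hat\psi_+(E,n)\parallel\hat\psi_-(E,n)$'' at virtual levels; these are facts about the direct problem (where both are solutions of one operator $H$), whereas here $\psi_+$ and $\psi_-$ solve a priori \emph{different} Jacobi equations (with $a_\pm,b_\pm$ from \eqref{5.1}) --- indeed $a_+\equiv a_-$, $b_+\equiv b_-$ is exactly what Lemma~\ref{lem5.2} is used to prove, cf.\ Remark~\ref{rem5}. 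The paper instead evaluates $\tilde h_\mp(\la_k,n)$ directly from \eqref{5.11} (only the last summand survives), obtaining \eqref{5.77}, and then \eqref{2.11} yields \eqref{5.71}; no proportionality of $\psi_+$ and $\psi_-$ is used. Likewise at band edges the numerator $\psi_+\psi_--h_+h_-$ has no ``Wronskian-type antisymmetry'' to exploit; the paper's removability proof at $\Omega_1,\Omega_2,\Omega_3$ uses the boundary conditions {\bf III(b)} and \eqref{new3} to get \eqref{new4}, the splitting $h_-=h_1+h_2$ with the Cauchy kernel $d_-$ of \eqref{5.55}, and Muskhelishvili-type endpoint estimates (with separate resonance/non-resonance cases) to reach $G=o\big((z-E)^{-1}\big)$. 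These analytic estimates, which you yourself flag as ``where the real work lies,'' are the substance of the proof and are not supplied, and the shortcut you propose in their place would not go through.
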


\begin{proof}
(i). We can rewrite $G(z,n)$ as
\beq\label{5.25}
G(z,n)= \frac{\tilde\psi_+(z,n)\tilde\psi_-(z,n) -
\tilde h_+(z,n)\tilde h_-(z,n)}{\tilde W(z)},
\eeq
where $\tilde h_\pm(z,n)=\de_\pm(z)h_\pm(z,n)$ as usual.
The numerator is bounded near the points under consideration and the
denominator does not vanish there.
Thus $G(z,n)$ has no singularities at the points $(M_+\cup M_-) \setminus \si_d$.

Furthermore, by Lemma~\ref{lem5.1}, {\bf II}, {\bf (a)}, and Lemma~\ref{lem2.1}
we know that $G(z,n)$ has continuous limiting values on the sets
$\si_-$ and $\si_+$, except possibly at the edges, and satisfies
$$
G(\lau,n) =\overline{G(\lal,n)},\quad \la\in\si_+ \cup \si_-.
$$
Hence, if we can show that these limits are real, they will be equal and
$G(z,n)$ will extend to a meromorphic function on $\mathbb{C}$, that is, (i) holds.
To this aim we first observe that \eqref{5.30}, \eqref{5.22}, and Lemma~\ref{lem2.1}
imply
\beq\label{5.24}
G(\lau,n) = G(\lal,n)\in\mathbb{R}, \quad \la\in\inte(\si^{(2)}).
\eeq
Thus, it remains to prove
\beq\label{5.26}
G(\lau,n) =  G(\lal,n)\in\mathbb{R}
\quad \mbox{for}\quad \la\in\inte(\si_-^{(1)})\cup\inte(\si_+^{(1)}).
\eeq
Let us show that $G(\la,n)$ has no jump on the set
$\inte(\si_-^{(1)})\cup\inte(\si_+^{(1)})$. We abbreviate
\beq\label{5.28}
\left[G\right] :=G(\la)
-\overline{G(\la)}=\left[\frac{\psi_+\psi_-}{W}\right] -
\left[\frac{h_+h_-}{W}\right], \quad \la\in\si_\pm^{(1),\mathrm{u}},
\eeq
and drop some dependencies until the end of this lemma for notational simplicity.

Let $\la\in\inte(\si_\mp^{(1),\mathrm{u}})$, then $\psi_\pm,h_\pm\in\mathbb{R}$
and $\overline T_\mp=-(\overline W\,\rho_\mp)^{-1}$.
By (\ref{2.18}), {\bf (I)}, {\bf (b)}, and
(\ref{5.10}) we obtain  for $\la\in\inte(\si_\mp^{(1)})$
\beq \label{5.29}
\left[\frac{\psi_+\psi_-}{W}\right]=  \psi_\pm \left[\frac{\psi_\mp}{W}\right] = \rho_\mp\psi_\pm
\left(\psi_\mp T_\mp+ \overline\psi_\mp \overline T_\mp\right)=
\rho_\mp h_\pm \psi_\pm |T_\mp|^2.
\eeq
Since $\rho_\pm\in\mathbb{R}$ for $\la\in\inte(\si_\mp^{(1),\mathrm{u}})$, (\ref{2.18}) implies
$$
\left[\frac{h_\mp}{W}\right] = \rho_\pm \left[h_\mp T_\pm\right].
$$
The only non-real summand in (\ref{5.11}) is the
Cauchy-type integral. The Plemelj formula applied to this
integral gives
$$
\left[h_\mp T_\pm\right]= -
\rho_\mp\psi_\pm |T_\mp|^2 W(\psi_q^\pm,\breve\psi_q^\pm)=
\rho_\mp \psi_\pm|T_\mp|^2 \frac{1}{\rho_\pm},
$$
and by (\ref{5.29}) we get
\beq\label{5.301}
\left[\frac{h_+h_-}{W}\right]=\left[\frac{\psi_+\psi_-}{W}\right]
= \rho_\mp \psi_\pm h_\pm |T_\mp|^2,\quad \la\in\inte(\si_\mp^{(1)}).
\eeq
Since $\tilde W\neq 0$ for $\la\in\inte(\si_\mp^{(1)})$, the function
$$
\rho_\mp \psi_\pm h_\pm |T_\mp|^2= - \frac{\delta_\mp^2}{\rho_\mp}
\frac{\tilde\psi_\pm \tilde h_\pm}{|\tilde W|^2}
$$
is bounded on the set under consideration. Finally,
(\ref{5.301}) and (\ref{5.28}) imply (\ref{5.26}).

(ii). Now we prove that the function $G(z,n)$ has removable
singularities at the points $\pa\si_-\cup\pa\si_+\cup\si_d$. We divide
this set into four subsets
\beq \label{5.32}
\Omega_1^\pm=\pa\si^{(2)}\cap\inte(\si_\mp), \quad
 \Omega_2=\pa\si^{(2)}\cap\pa\si, \quad \Omega_3^\pm=\pa\si_\pm^{(1)}\cap\pa\si_\pm, \quad
\Omega_4=\si_d. \eeq

Since all singularities of $G$ are at most isolated poles, it is
sufficient to show that
\beq \label{5.35}
G(z)=o\big((z - E)^{-1}\big)
\eeq
from some direction in the complex plane.

$\Omega_1$: Consider $E \in \Omega_1^+$ (the case $E \in \Omega_1^-$ being
completely analogous). We will study $\lim_{\la \rightarrow E}G(\la,n)$ as $\la \in \inte(\si^{(2)})$
using \eqref{5.22} with the ``$-$'' sign. Note that $\psi_-=O(1)$, $\rho_-=O(1)$, and $\hat W(E)\neq 0$.
Moreover, we obtain from Lemma~\ref{lem2.1} respectively {\bf II} that
$$
\psi_+(\la) = \left\{ \begin{array}{cl}  O(1), & E \notin \hat M_+,\\
O\left(\frac{1}{\sqrt{\la - E}}\right), & E \in \hat M_+, \end{array} \right.
\quad
\frac{1}{T_+(\la)} = \left\{ \begin{array}{cl} O\left(\frac{1}{\sqrt{\la - E}}\right),
 & E \notin \hat M_+,\\
O(1), & E \in \hat M_+, \end{array} \right.
$$
which shows
$$
h_-(\la)=\frac{\ov{\psi_+(\la)}+R_+(\la)\psi_+(\la)}{T_+(\la)}=
O\left(\frac{1}{\sqrt{\la - E}}\right)
$$
for $\la \in \si^{(2)}$. Inserting this into \eqref{5.22} shows
 $G(\la,n)=O\big(\frac{1}{\sqrt{\la - E}}\big)$
and finishes the case $E\in \Omega_1$.

$\Omega_2$: For $E \in \pa\si^{(2)}\cap\pa\si$, we use \eqref{5.22}
and take the limit $\la \rightarrow E$ from $\si^{(2)}$. First of all,
observe that
$$
\breve\de_-\left(R_-\psi_- +\ol\psi_-\right)=\left\{\begin{array}{ll}
O(1) & E\in \si_v,\\
o(1) & E\notin\si_v.\end{array}\right.
$$
The case $E\in\si_v$ is evident. If $E\notin \si_v$ then \eqref{new3} and \eqref{P.1}
yield
$$\breve\de_-\left(R_-\psi_- +\ol\psi_-\right)=
\left\{\begin{array}{cl} \breve\de_-\big((\psi_- - \ol\psi_-) +(R_- + 1)\psi_-\big)
, & \quad  E\notin \hat M_-\\
\big(\breve\de_-(\psi_- + \ol\psi_-) +(R_- -
1)\breve\de_-\psi_-\big), & \quad E\in\hat
M_-\end{array}\right. =o(1).
$$
Therefore, both for virtual and non-virtual levels the estimate
\beq\label{new4}
\breve\de_-\left(R_-\psi_- +\ol\psi_-\right)\hat W=o(1),\quad E\in\pa\si_-,
\eeq
is valid. Inserting \eqref{5.10} into the summand $\ol\psi_+h_+\rho_+$ of
\eqref{5.22} (for the second summand we use an analogous approach) we obtain
(recall \eqref{defP})
\begin{align} \nn
\ol\psi_+h_+\rho_+ &=\ol\psi_+\rho_+ \rho_- (\ol \psi_- + R_-
\psi_-) W = \frac{\ol\psi_+\breve\de_+}{P_+P_-} \hat \de_+
\hat \de_- \breve\de_- (\ol \psi_- + R_- \psi_-) W\\ \label{new2}
&= \frac{\ol\psi_+\breve\de_+}{P_+P_-}
\breve\de_-\left(R_-\psi_- +\ol\psi_-\right)\hat W.
\end{align}
Combining the estimate
$$
\frac{\ol\psi_+\breve\de_+}{P_+P_-}=O\bigg(\frac{1}{\la -E}\bigg)
$$
with \eqref{new4} we have $G(z)=o\big((z-E)^{-1}\big)$ as desired.

$\Omega_3$:  Suppose that $E \in \pa \si_-^{(1)}\cap\pa\si_-$ (the case $E
\in \pa \si_+^{(1)}\cap\pa\si_+ $ is again analogous). Now we
cannot use \eqref{5.22}, so we proceed directly from formula
\eqref{5.211} estimating the summands $\frac{\psi_+\psi_-}{W}$ and
$\frac{h_+h_-}{W}$ separately. We investigate the limit as
$\la\to E$ from the set $\inte(\si_-^{(1)})$.
By Lemma~\ref{lem2.1} and \eqref{estim} we have
\beq \label{5.53}
\frac{\psi_+\psi_-}{W}=\frac{\hat \psi_+\hat \psi_-}{\hat W} =
O\left(\frac{1}{\sqrt{\la - E}}\right),
\eeq
hence the first summand has the desired behavior. To estimate the second summand,
we split the function $h_-(\la,n)$ according to
\[
h_-(\la,n) =h_1(\la,n) +h_2(\la,n),
\]
where
\[
h_1(\la,n)=W_{q,n-1}^+(\zeta_-(\la,\cdot),
d_-(\la,n,\cdot)), \quad h_2(\la,n)=h_-(\la,n) - h_1(\la,n),
\]
\be \label{5.55}
d_-(\la,n,.):=\int_{\sig_-^{(1),u}}
\frac{|T_-(\xi)|^2 \psi_+(\xi,n) \psi_q^+(\xi,\cdot) } {\xi-\lam
}d\omega_-(\xi).
\ee
It follows from the proof of Lemma \ref{lem5.1}
that $h_2(\la)=O(\zeta_-(\la))$ for $\la\to E$. Recall that at the
point under consideration singularities
$E\in\{\mu_1^+, \dots, \mu_{g_+}^+\} \cup \hat M_-$ might occur (in the case
$\pa\si_-^{(1)}\cap\pa\si$ one can have $E\in M_+\cup\breve M_+$ and
in the case $\pa\si_-^{(1)}\cap\pa\si_+^{(1)}$ one can have
$E\in\hat M_+$). Introduce
\beq\label{new9}
\phi_q^+(z,n):=\breve\de_+(z)\breve\psi_q^+(z,n)
\eeq
and recall that \eqref{new7} implies
\beq\label{new8}
\phi_q^+(z,n) - \phi_q^+(E,n)=O(\sqrt{z - E}).
\eeq
Then (see \eqref{defP} and \eqref{0.63}) we have
\be \label{5.56}
\frac{h_+ \zeta_-}{W}=O\bigg(\frac{h_+\breve{\psi}_q^+}{W T_+}\bigg)=
O\bigg(\frac{h_+\hat\de_+\breve\de_+\breve{\psi}_q^+}{P_+}\bigg)=
O\bigg(\frac{h_+ \hat\de_+}{P_+}\bigg)\phi_q^+.
\ee
Now we
distinguish two cases: (a) $E\in \pa\si_-^{(1)}\cap\pa\si_+^{(1)}$
and (b) $E\in\pa\si_-^{(1)}\cap\pa\si$.

Case (a). By \eqref{5.10} and \eqref{new4}  we have
\beq\label{new13}
\hat\de_+h_+=\frac{(R_-\psi_- +
\ol\psi_-)\hat\de_+}{T_-} = \frac{\hat W\breve\de_-(R_-\psi_- +
\ol\psi_-)}{P_-} =o\bigg(\frac{1}{\sqrt{\la - E}}\bigg),
\eeq
therefore
\be \label{5.80}
\frac{h_+(\la)\zeta_-(\la)}{W(\la)}=o\bigg(\frac{1}{\sqrt{\la -
E}}\bigg) \frac{\phi_q^+(\la)}{P_+(\la)}.
\ee
As a consequence
of $\frac{\phi_q^+}{P_+}=O\big(\frac{1}{\sqrt{\la - E}}\big)$ we
obtain
\be \label{5.82}
\frac{h_+h_2}{W}=o\bigg(\frac{1}{\la -E}\bigg),
 \quad E\in\pa\si_-.
\ee

Next, we have to estimate
\beq\label{new14}
\frac{h_+h_1}{W}=W_{q,n-1}^+\Big(\frac{h_+\zeta_-}{W},d_-\Big).
\eeq
By \eqref{new8} we can represent \eqref{5.80} as
\be \label{5.801}
\frac{h_+(\la)\zeta_-(\la)}{W(\la)}= o\bigg(\frac{1}{\sqrt{\la - E}}\bigg)
\bigg(\frac{\bar{\psi}_q^+(E)}{\sqrt{\la - E}} + O(1)\bigg).
\ee
Then \eqref{new14} implies
\begin{align} \label{5.83}
\begin{split}
\frac{h_+(\la,n)h_1(\la,n)}{W(\la)}&=o\bigg(\frac{1}{\sqrt{\la - E}}\bigg)\bigg(O(d_-(\la,n)) +
O(d_-(\la,n-1))\\
&\quad +\frac{O\left(W_{q,n-1}^+\left(\phi_q^+(E),
d_-(\la)\right)\right)}{\sqrt{\la-E}}\bigg).
\end{split}
\end{align}
To estimate $d_-$ in the first two summands we distinguish between
the resonance case, $E\in\si_v$, and non-resonance, $E \notin\si_v$.
First let $E\notin \si_v$, that is, $\hat W(E)\neq 0$. From \eqref{5.13} and \eqref{5.131}
we see that the integrand is bounded as $\la\to E\notin\si_v$, then $d_-(\la)=O(1)$ by \cite{Mu}.

If $E\in\si_v$, then \eqref{estim} (see also \eqref{5.13}) yields
$$
|T_-(\xi)|^2\rho_-(\xi)
\psi_+(\xi,\cdot)
\psi_q^+(\xi,\cdot)=O\left(\frac{1}{\sqrt{\xi-E}}\right)
$$
and \cite[Eq. (29.8)]{Mu} implies
\be \label{5.62}
d_-(\la)=o\left(\frac{1}{\sqrt{\la - E}}\right).
\ee
For the estimate of the last summand in \eqref{5.83} we use \eqref{5.13} and \eqref{5.55} to
represent the integrand in $W_{q,n-1}^+\left(\breve \psi_q^+(E), d_-(\la)\right)$ as
\begin{align*}
&|T_-(\xi)|^2\rho_-(\xi) \psi_+(\xi,n)
W_{q,n-1}^+\left(\psi_q^+(\xi),
\phi_q^+(E)\right)\\
&\hspace{1cm} =O\bigg(\frac{\sqrt{\xi - E}} {|\hat
W(\xi)|^2}W_{q,n-1}^+\big(\hat\psi_q^+(\xi),
\phi_q^+(E)\big)\bigg).
\end{align*}
It follows from \eqref{new7} and \eqref{new9} that
$$
W_{q,n-1}^+(\hat\psi_q^+(\xi),\phi_q^+(E))=O\big(\sqrt{\xi -
E}\big),
$$
which implies together with \eqref{estim} the boundedness of the
integrand near $E$. Thus, \beq\label{new12}
W_{q,n-1}^+\left(\phi_q^+(E), d_-(\la)\right)=O(1), \eeq and
combining \eqref{5.82}, \eqref{5.83}, \eqref{new12}, and
\eqref{5.62} finishes case (a).

Case (b). Now we do not have estimate \eqref{new4} (cf.\ {\bf III}, {\bf (b)}) at our disposal,
but we can proceed as in \eqref{5.56}, \eqref{new13} since $P_+(E)\neq 0$ and arrive at
\beq\label{new16}
\frac{h_+ \zeta_-}{W}=O(h_+\hat\de_+)=O\bigg(\frac{\hat W\breve\de_-(R_-\psi_- +
\ol\psi_-)}{P_-}\bigg)=O\bigg(\frac{\hat W}{\sqrt{\la - E}}\bigg).
\eeq
This estimate is sufficient to conclude that \eqref{5.82} is valid in case (b) as well.
For $h_1$, we use the following estimate (cf. \eqref{5.62} and \eqref{new16}) instead of \eqref{new14}:
$$
\frac{h_+h_1}{W}=O\left(\frac{h_+\zeta_-}{W}\right)O\left(d_-\right)=
O\left(\frac{\hat W}{\sqrt{\la - E}}\right)o\left(\frac{1}{\sqrt{\la - E}}\right).
$$
Combining this with \eqref{5.82} finishes case (b).

$\Omega_4$: Finally we have to show that the singularities of $G(z,n)$ at the
points of the discrete spectrum are removable. Since
$\tilde W(z)$ has simple zeros at $z=\la_k$, it suffices by \eqref{5.25} to show that
\beq \label{5.71}
\tilde h_+(\la_k,n)\tilde h_-(\la_k,n) =
\tilde\psi_-(\la_k,n)\tilde\psi_+(\la_k,n).
\eeq
By Lemma~\ref{lem5.1}, the functions $\tilde h_\mp=\de_\mp h_\mp$
given in (\ref{5.11}) are continuous at the points $\breve M_\pm$. Since
$(\de_\mp T_\pm^{-1})(\la_k)=0$ and
$(\de_\mp T_\pm^{-1}\breve\psi_q^\pm)(\la_k)=0$, only the last
summand in (\ref{5.11}) is non-zero.
We compute the limit of this summand as $\la \rightarrow \la_k$ using (\ref{2.18}),
\beq \label{5.77}
\tilde h_\mp(\la_k) =-\ga_{\pm,k} \tilde\psi_\pm(\la_k) \frac{d\tilde W(\la_k)}{d\la},
\eeq
and apply (\ref{2.11}) to obtain (\ref{5.71}).
\end{proof}

The identity $G(z,n)\equiv0$ implies
\beq\label{new17}
\psi_+(z,n) \psi_-(z,n)- h_+(z,n)h_-(z,n) \equiv 0, \quad \forall n \in \Z.
\eeq
For $z \rightarrow \infty$ we obtain by (\ref{1.12}) and (\ref{Dop})
$$
\psi_+(z,n)\psi_-(z,n)=K_+(n,n)K_-(n,n)\vprod{j=0}{n-1}\frac{a_q^+(j)}{a_q^-(j)}(1+o(1)).
$$
Formulas \eqref{h infty} and \eqref{T infty} imply
$$
h_+(z,n)h_-(z,n)=\frac{1}{T_+(\infty)^2 K_+(n,n)K_-(n,n)}
\vprod{j=0}{n-1}\frac{a_q^-(j)}{a_q^+(j)}(1+o(1))
$$
and by \eqref{new17},
$$
K_+(n,n)K_-(n,n)\vprod{j=0}{n-1}\frac{a_q^+(j)}{a_q^-(j)}=\frac{1}{T_+(\infty)}.
$$
The value on the left hand side does not depend on $n$, so using \eqref{5.1} we conclude
\be\label{new18}
a_+(n) = a_-(n) \equiv a(n), \quad \forall n \in \Z.
\ee
It remains to prove $b_+(n)=b_-(n)$.
If we eliminate the reflection coefficient $R_\pm$ from \eqref{5.10} at $n$ and
\eqref{4.26} at $n+1$ we obtain
\begin{align}
\begin{split}
&G_1(\la,n):=\frac{\psi_+(\lam,n) \psi_-(\lam,n+1)
-h_+(\lam,n+1) h_-(\lam,n)}{W(\lam)}\\
&= \rho_+(\la)
\big(\overline{h_\pm(\lam,n+1)} \psi_\pm(\lam,n)
- \overline{\psi_\pm(\lam,n)} h_\pm(\lam,n+1)\big), \quad \la\in\si^{(2),\mathrm{u,l}}.
\end{split}
\end{align}
Proceeding as for $G(\lam,n)$ in Lemma~\ref{lem5.2} we can show that that the function
$G_1(z,n)$ is holomorphic in $\C$. From \eqref{h infty}, \eqref{Dop}, \eqref{T infty},
 \eqref{1.12}, \eqref{new18}, and the Liouville theorem we conclude that
$$
\frac{\psi_+(z,n) \psi_-(z,n+1)-h_+(z,n+1) h_-(z,n)}{W(z)}= -1/a(n).
$$
We compute the asymptotics of
\[
\bar W(z,n) := a(n) \left(\psi_+(z,n) \psi_-(z,n+1) - h_+(z,n+1) h_-(z,n) \right)\\
= - W(z)
\]
as $z \rightarrow \infty$ and obtain (compare (\ref{B4jost}))
\be
0= \bar W(z,n) - \bar W(z,n-1) = (b_+(n) - b_-(n))K_+(0,0)K_-(0,0).
\ee
This implies in particular $b_+(n) = b_-(n) \equiv b(n)$, hence the proof of Theorem~\ref{theor2} is finished.

{\bf Acknowledgments.}
I.E. and J.M. gratefully acknowledge the extraordinary hospitality of the
Faculty of Mathematics of the University of Vienna during their stay in 2007,
where parts of this paper were written.

\end{document}